\newcommand{\blind}{0}
\newtheorem{theorem}{Theorem}
\numberwithin{theorem}{section}
\newtheorem{lemma}[theorem]{Lemma}
\newtheorem{definition}[theorem]{Definition}
\newtheorem{problem}[theorem]{Problem}
\newtheorem{remark}[theorem]{Remark}
\newtheorem{example}[theorem]{Example}
\newtheorem{conjecture}[theorem]{Conjecture}
\newtheorem{algorithm}[theorem]{Algorithm}
\newcommand{\RR}{\mathbb{R}}
\newcommand{\Tn}{\mathcal{U}_m}
\newcommand{\tconv}{\text{tconv\,}}
\DeclareMathOperator*{\argmin}{arg\,min}
\newcommand{\Trop}{\text{Trop}}
\begin{document}

\def\spacingset#1{\renewcommand{\baselinestretch}%
{#1}\small\normalsize} \spacingset{1}

%%%%%%%%%%%%%%%%%%%%%%%%%%%%%%%%%%%%%%%%%%%%%%%%%%%%%%%%%%%%%%%%%%%%%%%%%%%%%%

\if0\blind
{
  \title{\bf Tropical principal component analysis on the space of ultrametrics}
  \author{Robert Page\\
    Department of Operations Research, Naval Postgraduate School\\
    and \\
    Ruriko Yoshida\thanks{
    The authors gratefully acknowledge the support of National Science
  Foundation for partially supporting R.Y. (DMS 1916037) and L.Z. (NSF Graduate Research Fellowship). The authors
  also thank Prof.~Bernd Sturmfels for his useful comments.}\hspace{.2cm}\\
    Department of Operations Research, Naval Postgraduate School\\
  and\\
   Leon Zhang\\
   Department of Mathematics, University of California, Berkeley}
  \maketitle
} \fi

\if1\blind
{
  \bigskip
  \bigskip
  \bigskip
  \begin{center}
    {\LARGE\bf  Tropical principal component analysis on the space of ultrametrics}
\end{center}
  \medskip
} \fi

\bigskip
\begin{abstract}
In 2019, Yoshida et al.\ introduced a notion of tropical principal component analysis (PCA).
 The output is a tropical polytope with a fixed number of vertices that best fits the data.
 We here apply tropical PCA to dimension reduction and visualization of data sampled
 from the space of phylogenetic trees. Our main results are twofold:
 the existence of a tropical cell decomposition into regions of fixed tree topology and the development of a stochastic optimization method
 to estimate the tropical PCA using a Markov Chain Monte Carlo (MCMC)
 approach. This method
 performs well with simulation studies, and it is applied to three empirical datasets: Apicomplexa
 and African coelacanth genomes as well as sequences of hemagglutinin for influenza from New York.
\end{abstract}

\noindent%
{\it Keywords:}  Phylogenetics, Phylogenomics, 
Tree Spaces, Unsupervised Learning %7 or fewer keywords

\spacingset{1.45}
\section{Introduction}
\label{sec:intro}

New technologies allow for the generation of genetic sequences 
cheaply and quickly.  However, it can be challenging to analyze datasets of collections of phylogenetic trees
due to their
high dimensionality and the complex structure of the space of phylogenetic
trees with a fixed number of leaves in which the data lie.

Principal component analysis (PCA) is one of the most popular methods
to reduce dimensionality of input data and to visualize them.  
Classical PCA takes data points in a high-dimensional
Euclidean space and represents them in a lower-dimensional plane in
such a way that the residual sum of squares is minimized.  We cannot directly apply the classical PCA to a set of phylogenetic trees
because the space of phylogenetic trees with a fixed number of leaves is
not Euclidean; it is a union of lower dimensional
polyhedral cones in $\mathbb{R}^{{m \choose 2}}$, where $m$ is the
number of leaves.

Nye showed an algorithm in \cite{Nye} to compute the first order
principal component 
over the space of phylogenetic trees of $m$ leaves.  He defines the
first order principal components as the end points of the unique
shortest connecting paths, or geodesics, defined by the ${\rm
  CAT}(0)$-metric introduced 
by Billera-Holmes-Vogtman (BHV)  over the tree space of phylogenetic
trees with fixed labeled leaves \cite{BHV}. 
Nye in \cite{Nye} used a convex hull of two points, i.e., the geodesic,
on the tree space as the first order PCA.  However, this idea does not generalize to higher order principal components
with the BHV metric as Lin et~al. \cite{LSTY} showed that the
convex hull of three points with the BHV metric over 
the tree space can have arbitrarily high dimension.  

On the other
hand the tropical metric in tree space defined by tropical
  convexity in the max-plus algebra is well-studied and well-behaved
  \cite{MS}. For example, the dimension of the convex hull of $s$
  tropical points is at most $s-1$. In 2019, Yoshida et al.\ in \cite{YZZ} defined a tropical PCA under the tropical
metric  with the max-plus tropical arithmetic in two ways: the
best-fit Stiefel
tropical linear space of fixed dimension closest to the data points in
the tropical projective torus, and the best-fit
tropical polytope with a fixed number of vertices closest to the data
points.  The authors showed that the latter object can be written as a
mixed-integer programming problem to compute them, and they applied the second definition to
datasets consisting of collections of phylogenetic trees. Nevertheless, exactly computing the best-fit tropical polytope can be expensive due to the high-dimensionality of the mixed-integer programming problem.

This paper focuses on the same approach to tropical PCA as a tropical polytope over the
space of equidistant trees with a fixed set of leaves.  In order to
use tropical PCA to visualize a dataset of phylogenetic trees, we show the existence of a decomposition of the tropical PCA into regions of unchanging tree topology deriving intrinsically from tropical geometry in Section \ref{sec:PCA-props}. 
% Hower, in order to apply their methods to a set of phylogenetic trees,
% they have only one approach to estimate the tropical PCA by
% enumerating all possible $s$-subset of trees from the input dataset
% for computing the $(s-1)$th order principal compoenents, because they
% defined a tropical PCA over the tropical projective torus not over a
% space of phylogenetic trees, that is, a lower dimensional subspace of
% the the tropical projective torus.  Therefore, at this moment, there is
% no method to compute a tropical PCA over the space of phylogenetic
% trees.  
In Section \ref{tropPCA} we propose a heuristic method to compute a tropical PCA as a
tropical polytope over a
space of rooted phylogenetic trees with a fixed number of leaves.  % \hl{More
% specifically we propose a stochastic computation using Metropolis
% Hasting algorithm to estimate a tropical PCA over the space of \em
%   equidistant phylogenetic trees with the number of leaves $m$. } 
To show its performance, we conduct intensive simulation studies with
the proposed method in Section \ref{sec:simulations}.  We end this paper in Section \ref{sec:experiments} with computational experiments
on three datasets consisting of Apicomplexa and African
coelacanth genomes as well as sequences of hemagglutinin for influenza. 

% This paper is organized as follows:  In Section \ref{basics} we will
% discuss basics of tropical arithmetics under the {\em
%   max-plus} algebra which we will use throughout this paper.  In
% Section \ref{equid} we will review some definitions from phylogenetics
% and we will connect them to space of {\em ultrametrics} as a tropical
% linear space.  
% In Section \ref{tropPCA} we will discuss a tropical PCA defined as a
% tropical polytope which introduced by \cite{YZZ}.  They interpret a
% tropical PCA as a best-fit tropical polytope to an input data.  Then
% in Section \ref{MCMC} we will apply Metropolis-Hasting algorithm to
% estimate the best-fit tropical polytope closest to a sample of
% equidistant trees with $m$ leaves, over the space of ultrametrics.
% Then we will conduct simulation studies to show our approximation
% works very well in general and we will end this paper with
% applications to two empirical datasets, Apicomplexa and African
% coelacanth genomes. 

\section{Tropical basics}\label{basics}
In this section we review some basics of tropical arithmetic and tropical
geometry pertaining to our setting.  See
\cite{MS} or \cite{J} for more detail. 

\begin{definition}[Tropical arithmetic operations]
Throughout this paper we will perform arithmetic in the
max-plus tropical semiring $(\,\mathbb{R} \cup \{-\infty\},\oplus,\odot)\,$.
In this tropical semiring,  we define the basic tropical
arithmetic operations of addition and multiplication as:
$$a \oplus b := \max\{a, b\}, ~~~~ a \odot b := a + b ~~~~\mbox{  where } a, b \in \mathbb{R}\cup\{-\infty\}.$$
Note that $-\infty$ is the identity element under addition and 0 is the identity element under multiplication.% : for all $a\in \mathbb{R}\cup \{-\infty\}$, we have $a \oplus -\infty = a$ and $a \odot 0 = a$.
\end{definition}

\begin{definition}[Tropical scalar multiplication and vector addition]
For any scalars $a,b \in \mathbb{R}\cup \{-\infty\}$ and for any vectors $v = (v_1,
\ldots ,v_e), w= (w_1, \ldots , w_e) \in (\mathbb{R}\cup-\{\infty\})^e$, we 
define tropical scalar multiplication and tropical vector addition as follows:
$$a \odot v = (a + v_1, a + v_2, \ldots ,a + v_e)$$
$$a \odot v \oplus b \odot w = (\max\{a+v_1,b+w_1\}, \ldots, \max\{a+v_e,b+w_e\}).$$
\end{definition}

Throughout this paper we consider the \emph{tropical projective
  torus} $\mathbb R^e \!/\mathbb R {\bf
  1}$, where ${\bf 1}:=(1, 1, \ldots , 1)$ is the all-ones vector.  

\begin{definition}[Generalized Hilbert projective metric]
For any two points $v, \, w \in \mathbb R^e \!/\mathbb R {\bf
  1}$,  the {\em tropical
  distance} $d_{\rm tr}(v,w)$ between $v= (v_1, \ldots , v_e)$ and $w = (w_1,\dots, w_e)$ is defined as:
\begin{equation}
\label{eq:tropmetric} d_{\rm tr}(v,w) \,\, = \,\,
\max_{i,j} \bigl\{\, |v_i - w_i  - v_j + w_j| \,\,:\,\, 1 \leq i < j \leq e \,\bigr\}.
\end{equation}This
distance measure is a metric in $\mathbb R^e \!/\mathbb R {\bf
  1}$ and it is also known as the
{\em generalized Hilbert projective metric} 
\cite[\S 2.2]{AGNS}, \cite[\S 3.3]{CGQ}.
  \end{definition}

\begin{definition}
A subset $S \subset \mathbb{R}^e$ is called \textit{tropically convex} if it contains the point $a \odot  x \oplus b \odot y$ for any $x,y
\in S$ and any $a, b \in \mathbb{R}$. The \textit{tropical convex
  hull} or \textit{tropical polytope} ${\rm tconv}(V)$ of a finite subset $V \subset
\mathbb{R} ^e$ is the smallest tropically convex subset containing $V
\subset \mathbb{R}^e$. The tropical convex hull of $V$ can be 
written as  the set of all tropical linear combinations 
$${\rm tconv}(V) = \{a_1 \odot v_1 \oplus a_2 \odot v_2 \oplus \cdots \oplus a_r \odot v_r : v_1,\ldots,v_r \in V \mbox{ and } a_1,\ldots,a_r \in \mathbb{R}\}.$$
Any tropically convex subset $S$ of $\mathbb{R}^e$ is closed under tropical
scalar multiplication, $\mathbb{R} \odot S \subseteq S$. 

Finally, a subset of $\mathbb R^e/\mathbb R{\bf 1}$ is said to be \emph{tropically convex} if it is the quotient of a tropically convex subset of $\mathbb R^e$.
\end{definition}

\begin{definition}
Consider a tropical polytope
$\mathcal{P} = {\rm tconv}(D^{(1)},D^{(2)},\ldots,D^{(s)})$.
The $D^{(i)}$ are points in $\RR^e/\RR {\bf 1}$.
We have
\begin{equation}
\label{eq:tropproj} %\label{eq:nearestpoint}
 \pi_\mathcal{P} (D) \,= \,
\lambda_1 \odot  D^{(1)} \,\oplus \,
\lambda_2 \odot  D^{(2)} \,\oplus \, \cdots \,\oplus \,
\lambda_s \odot  D^{(s)}  ,
\quad {\rm where} \,\, \lambda_k = {\rm min}(D-D^{(k)}) .
\end{equation}
This formula appears in \cite[(5.2.3)]{MS}. It allows us to easily
project an ultrametric $D$ (or any other point in $\RR^e$) onto the
tropical convex hull of $s$ given ultrametrics.
\end{definition}

\begin{definition}
\label{def:types}
Let $\mathcal P = \tconv(D^{(1)}, \dots, D^{(s)})\subseteq \mathbb R^{e}/\RR{\bf 1}$ be a tropical polytope. Each point $x$ in $\mathbb R^{ e}/\RR{\bf 1}$ has a \emph{type} $S = (S_1,\dots, S_{ e })$ according to $\mathcal P$, where an index $i$ is in $S_j$ if
\[D^{(i)}_j - x_j = \max(D^{(i)}_1-x_1,\dots, D^{(i)}_{ e }-x_{ e }).\]
The tropical polytope $\mathcal P$ consists of all points $x$ whose type $S = (S_1,\dots, S_{e})$ has all $S_i$ nonempty. Each collection of points with the same type is called a \emph{cell}.
\end{definition}

\begin{example}
\label{example-polytopes}
Consider the five points $D^{(1)} = (0,0,0), D^{(2)} = (0, 3, 0), D^{(3)} = (0,3,3), D^{(4)} = (0,1,2)$, and $D^{(5)} = (0,2,1)$. The tropical convex hull of the first four points and of all five points are presented below, along with the decomposition of the polytope into cells.
\begin{figure}[h!]
\begin{center}
\includegraphics[height=2in]{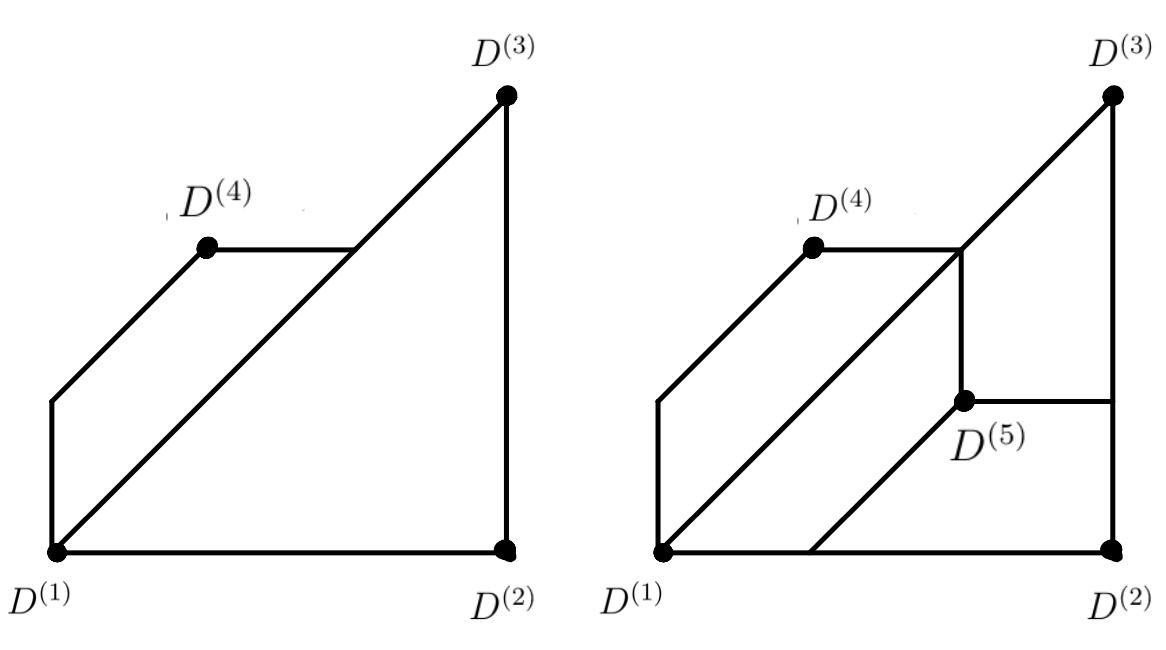}
\end{center}
\caption{The two tropical polytopes of Example
  \ref{example-polytopes}.  Note that we visualize the tropical projective torus $\mathbb R^e/\mathbb R{\bf 1}$ by forcing the first coordinate of each point to be 0.}
\end{figure}
\end{example}

\subsection{Space of ultrametrics}\label{equid}

Next we describe some basics on phylogenetic trees and review an interpretation of the space of equidistant trees as a tropical linear space. 

\begin{definition}
Let $T=(V,E)$ be a tree with leaves with labels $[m]: = \{1, \ldots ,
m\}$ but no labels on internal nodes in $T$.   We call such a tree a \emph{phylogenetic tree}.
An {\em equidistant tree} is a rooted phylogenetic tree such that a total
branch length from its root to each leaf is the same. 
\end{definition}

\begin{definition}
A \emph{dissimilarity map} $d$ is a function $d:[m]\times [m]\to
\mathbb R_{\geq 0}$ such that $d(i,i)=0$ and $d(i,j)=d(j,i)\geq 0$ for
each $i,j\in [m]$. If a dissimilarity map $d$ satisfies a triangle
inequality, i.e., $d(i,j)\leq
d(i,k)+d(k,j)$ for all $i,j,k\in [m]$, then we call $d$ a
\emph{metric}. 
We can represent a dissimilarity map $d$ by an $m\times m$ matrix $D$
whose $(i,j)$th entry is $d_{ij}$. Because $D$ is clearly symmetric
and all diagonal entries are zeros, we can regard
$d$ as a vector in $\mathbb R^{e} = \mathbb R^{\binom m 2}$. 
\end{definition}
Note that if it is clear, for convenience, we write $d_{ij}$ for $d(i,j)$. 

\begin{definition}
Let $T$ be a phylogenetic tree with $m$ leaves labeled with the
elements of $[m]$, and assign a length $\ell_e\in \mathbb R$ to each
edge $e$ of $T$. Define $d:[m]\times [m]\to \mathbb R$ such that
$d_{ij}$ is the total length of the unique path from leaf $i$ to leaf
$j$. We call a function $d$ obtained in this way a \emph{tree
  distance}. If, furthermore, each entry of the distance matrix $D$ is
nonnegative, then $d$ is in fact a metric. We call such a tree distance a
\emph{tree metric}. As before, we can embed $D$ into $\mathbb R^{e}$.  
\end{definition}

\begin{definition}
Let $d:[m]\times [m]\to \mathbb R_{\geq 0}$ be a metric which satisfies the following strengthening of the triangle inequality for each choice of $i,j,k\in [m]$:
\[d(i,k)\leq \max(d(i,j),d(j,k)).\]
We call such a metric an \emph{ultrametric}. Let $\mathcal U_m$ denote the collection of all ultrametrics in $\mathbb R^e/{\mathbb R \bf 1}$.
\end{definition} 

\begin{remark}
It is well-known that a distance matrix is a tree metric for an
equidistant tree if and only if it is ultrametric.  
\end{remark}

Therefore, we view $\mathcal U_m$ as the space of
equidistant trees with fixed leaf labels.  
For the expert in tropical geometry the following theorem shows an
explicit description of the space of equidistant trees with fixed
labels of leaves. 

\begin{theorem}[\cite{DS}]
Let $L_m$ be the subspace of $\mathbb R^e$ defined by the linear
equations $x_{ij} - x_{ik} + x_{jk}=0$ for $1\leq i < j <k \leq
m$. Let $\Trop(L_m)\subseteq \RR^e/\RR {\bf 1}$ be the tropicalization
of the linear space with points $(v_{12},v_{13},\ldots, v_{m-1,m})$
such that $\max(v_{ij},v_{ik},v_{jk})$ is obtained at least twice for
all triples $i,j,k\in [m]$.

Then the image of $\mathcal U_m$ in the tropical projective torus
$\RR^e/\RR {\bf 1}$ coincides with $\Trop(L_m)$. 
\end{theorem}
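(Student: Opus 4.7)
My plan is to establish the two inclusions $\mathcal U_m \subseteq \Trop(L_m)$ and $\Trop(L_m) \subseteq \mathcal U_m$ in $\RR^e/\RR{\bf 1}$, both bridged by the classical ``strong three-point'' characterization of ultrametrics: a symmetric nonnegative function $d:[m]\times [m]\to \RR_{\geq 0}$ with $d(i,i)=0$ is an ultrametric if and only if for every triple $i<j<k$ the maximum of $\{d_{ij},d_{ik},d_{jk}\}$ is attained at least twice. I would first prove this lemma in full. The forward direction is immediate: if, say, $d_{ik}$ were strictly the largest, then the strong triangle inequality $d_{ik}\le \max(d_{ij},d_{jk})$ would fail. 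The reverse direction is an elementary case check: relabel so that $d_{ij}=d_{ik}\ge d_{jk}$, verify each of the three cyclic forms of the ultrametric inequality, and then recover the ordinary triangle inequality from nonnegativity.

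For the forward inclusion $\mathcal U_m\subseteq \Trop(L_m)$, I would exploit that the ``max attained twice'' condition is shift-invariant: adding a constant $c$ to every coordinate of a vector in $\RR^e$ shifts each triple's maximum by $c$ without changing the set of indices achieving that maximum. Consequently, given any representative ultrametric $d$ of a class in $\mathcal U_m$, the lemma directly verifies the defining condition of $\Trop(L_m)$, so $[d]\in\Trop(L_m)$.

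For the reverse inclusion $\Trop(L_m)\subseteq \mathcal U_m$, I would use the same shift-invariance in the other direction: given $v\in \Trop(L_m)$, translate $v$ by adding $-\min_{ij}v_{ij}\cdot \mathbf 1$ to obtain a representative $v'$ with all entries $\ge 0$. Viewing $v'$ as a symmetric $m\times m$ matrix with zeros on the diagonal, the max-twice condition is still satisfied on every triple, so by the lemma $v'$ is an ultrametric, and hence its class lies in $\mathcal U_m$. The classical bijection between ultrametrics on $[m]$ and equidistant trees with leaves labeled by $[m]$ then realizes each such class as the tree distance of a unique equidistant phylogenetic tree, which is the interpretation we want for applications.

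The argument is essentially elementary once the three-point characterization is in hand, so there is no single hard step. The only point that requires care is checking that both constructions are well defined modulo $\RR{\bf 1}$: the max-twice condition and the ultrametric inequality (after passing to a nonnegative representative) are really statements about the coordinate differences $v_{ij}-v_{ik}$, and this coordinate-difference viewpoint is what makes the quotient by $\RR{\bf 1}$ harmless in both directions.
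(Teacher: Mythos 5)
Your proof is correct, but note that the paper itself offers no proof of this theorem: it is quoted as a known result attributed to Develin and Sturmfels, so there is no in-paper argument to compare against. Your reduction to the three-point lemma (a nonnegative symmetric $d$ with zero diagonal is ultrametric if and only if every triple attains its maximum at least twice), combined with the shift-invariance of the max-twice condition, is exactly the standard argument in the literature, and both directions check out. Two small remarks. First, translating by $-\min_{ij}v_{ij}$ makes the smallest off-diagonal entry exactly $0$; this is harmless under the paper's definition of a dissimilarity map (which allows $d(i,j)=0$ for $i\neq j$), but if you want a representative realizable by an equidistant tree with distinct leaves you should add any strictly larger constant, which changes nothing modulo $\RR{\bf 1}$. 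Second, your argument establishes equality of $\mathcal U_m$ with the max-twice locus, which is all the theorem asks because the statement \emph{defines} $\Trop(L_m)$ to be that locus; the genuinely nontrivial content hiding behind the citation --- that the tropical prevariety cut out by the three-term circuits coincides with the honest tropicalization of the ideal of $L_m$ --- is not addressed by your proof, and is the part for which one really does need Develin--Sturmfels (or Ardila--Klivans).
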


Therefore, the space of equidistant trees with fixed labels of leaves
is a tropical linear space defined by tropical equations.  Figure
\ref{fig:U3} shows a space of ultrametrics for $m = 3$.  This is a one-dimensional
tropical linear space. 

\begin{example}
For $m = 3$, the space of ultrametrics $\mathcal{U}_3$ is shown in
Figure \ref{fig:U3}.   This is a  tropical linear space defined by the tropical condition that $\max(v_{12}, v_{13}, v_{23})$ is attained twice.
\begin{figure}
  \centering
        \includegraphics[width=0.5\textwidth]{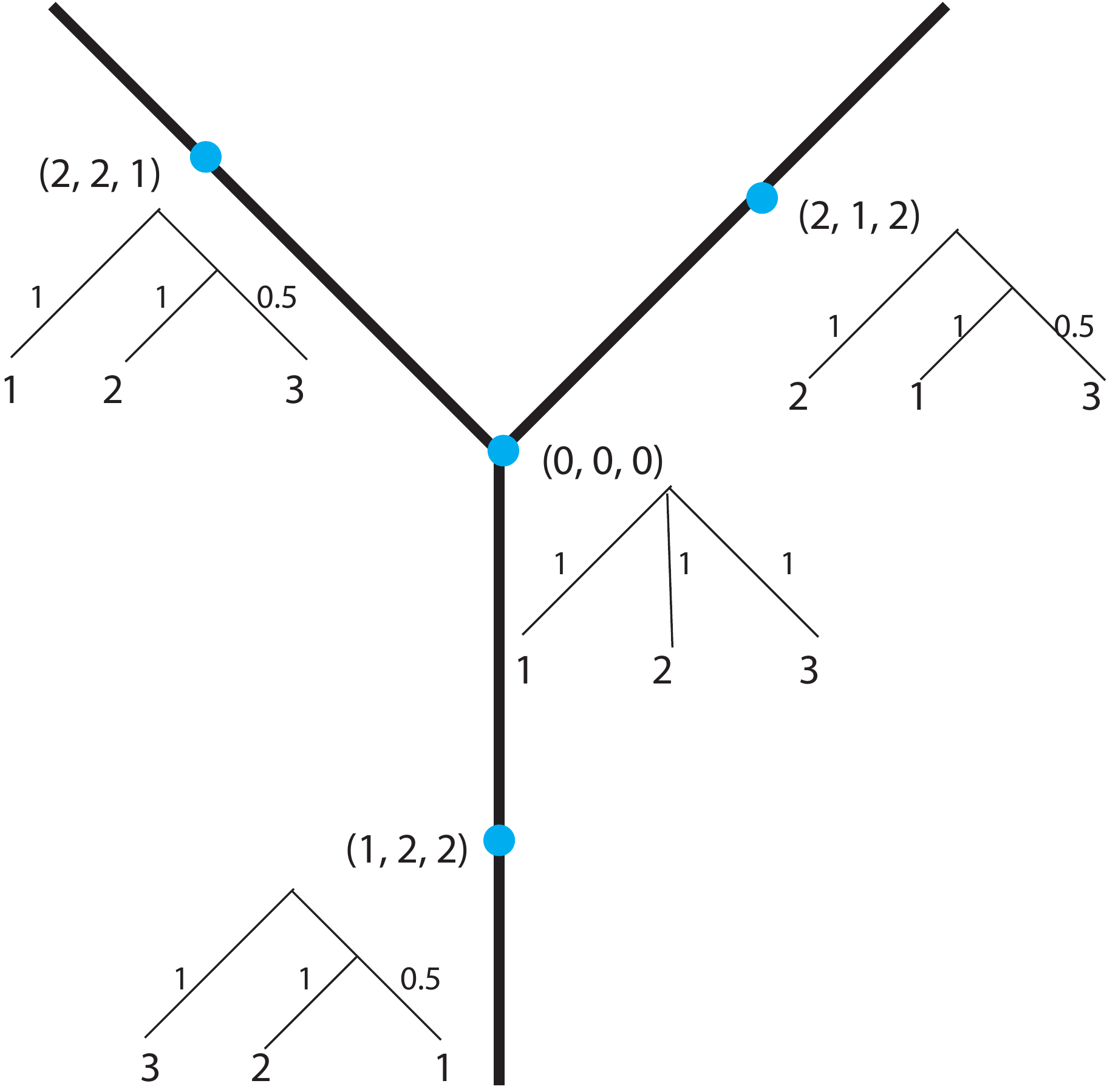}
  \caption{The space of ultrametrics for $m = 3$.  Blue dots
    correspond to the equidistant trees shown in the figure. }\label{fig:U3}
\end{figure}
\end{example}

\section{Properties of tropical PCA}
\label{sec:PCA-props}
We finally recall our notion of tropical PCA.  It is important that we
can interpret the tropical PCA in terms of equidistant trees.
Therefore, we seek to prove properties about its interpretation.

\begin{definition}
Let $\mathcal P = \tconv(D^{(1)}, \dots, D^{(s)})\subseteq \mathbb
R^{e}/\RR{\bf 1}$ be a tropical polytope with its vertices
$\{D^{(1)},\dots, D^{(s)}\} \subset \mathbb{R}^e/{\mathbb R} {\bf 1}$ and let $S = \{u_1,
\ldots u_n\}$ be a sample from the space of
ultrametrics $\mathcal{U}_m$.
Let $\Pi_{\mathcal P}(S):= \sum_{i = 1}^{|S|}d_{tr}(u_i, u'_i)$, where 
$u'_i$ is the tropical projection of $u_i$ onto a tropical
polytope $\mathcal P$.
Then the vertices $D^{(1)},\dots, D^{(s)}$ of the tropical
polytope $\mathcal P$ are called the $(s-1)$-th order tropical principal 
components of $S$ if the tropical polytope $\mathcal P$ minimizes
$\Pi_{\mathcal P}(S)$ over all possible tropical polytopes with $s$
many vertices.
% Suppose we have a sample $\{D^{(1)},\dots, D^{(s)}\}$. As in \cite{YZZ}, we define a $(s-1)$th order
% tropical PCA as a tropical convex hull of $s$ points in the space of
% ultrametrics $\Tn$ minimizing the sum of distances to the data points.
\end{definition}

One nice property of our tropical PCA is that each cell comprises
ultrametrics of the same tree topology.

\begin{theorem}
Let $\mathcal P = \tconv(D^{(1)}, \dots, D^{(s)})\subseteq \mathbb R^{e}/\RR{\bf 1}$ be a tropical polytope spanned by ultrametrics. Then any two points $x$ and $y$ in the same cell of $\mathcal P$ are also ultrametrics with the same tree topology.
\end{theorem}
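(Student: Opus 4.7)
The plan has two parts. First, I would show that every point of $\mathcal P$ is an ultrametric, by invoking the Draisma--Sturmfels theorem that $\mathcal U_m = \Trop(L_m)$ is a tropical linear space, hence tropically convex. Since each vertex $D^{(i)} \in \mathcal U_m$ and $\mathcal P$ is the smallest tropically convex set containing them, $\mathcal P \subseteq \mathcal U_m$, so $x$ and $y$ are ultrametrics.

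Second, I would argue that the triple topology is constant on each cell; since a rooted tree topology is determined by the cherry structure on every triple $\{a,b,c\} \subseteq [m]$, we fix such a triple and let $1, 2, 3$ denote the corresponding pair-coordinates. The triple topology of an ultrametric $z$ is either a strict cherry at some $p \in \{1,2,3\}$ (i.e.\ $z_p < z_q = z_r$) or a star ($z_1 = z_2 = z_3$). Suppose for contradiction that $x, y$ lie in the same cell but have different triple topologies. The proof uses two tools: each cell of a tropical polytope is classically convex (a standard result, see Maclagan--Sturmfels \S5.2), and the type $S$ is constant on the cell by definition of the cell.

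In the subcase where $x$ and $y$ have different strict cherries, say at $1$ and $2$ respectively, the classical segment $z(t) = (1-t)x + ty$ lies in the cell, hence in $\mathcal U_m$; yet a direct computation using $x_1 < x_2 = x_3$ and $y_2 < y_1 = y_3$ yields $z(t)_3 - z(t)_1 = (1-t)(x_2-x_1) > 0$ and $z(t)_3 - z(t)_2 = t(y_1-y_2) > 0$ for $t \in (0,1)$, so $z(t)_3$ strictly exceeds both other coordinates, contradicting ultrametricity on the triple. In the subcase where $x$ has a strict cherry at $1$ while $y$ is a star on this triple, I would pick any $\ell \in S_1$, which is nonempty because $x \in \mathcal P$. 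At $y$, the equality $y_1 = y_2 = y_3$ combined with $\ell \in S_1$ forces $D^{(\ell)}_1 \geq D^{(\ell)}_2, D^{(\ell)}_3$; by the ultrametricity of $D^{(\ell)}$ this maximum is attained twice, so $D^{(\ell)}_1 = D^{(\ell)}_j$ for some $j \in \{2,3\}$, and consequently $\ell \in S_j$ at $y$. At $x$, however, the strict inequality $x_1 < x_j$ combined with $D^{(\ell)}_1 = D^{(\ell)}_j$ gives $D^{(\ell)}_1 - x_1 > D^{(\ell)}_j - x_j$, so $\ell \notin S_j$ at $x$, contradicting the constancy of the type.

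The main obstacle is this second subcase. A purely convex-geometric argument is insufficient, because the classical segment from a strict-cherry ultrametric to a star ultrametric remains entirely in $\mathcal U_m$; one must instead exploit the type information together with the ultrametricity of the spanning vertex $D^{(\ell)}$ itself to detect the topological change via the appearance of $\ell$ in an additional $S_j$.
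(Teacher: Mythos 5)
Your proof is correct, and its overall skeleton matches the paper's: both first deduce $\mathcal P\subseteq\mathcal U_m$ from the tropical convexity of $\Trop(L_m)$, then reduce the topology comparison to the three-point condition on each triple of leaves, splitting into a ``star versus cherry'' case and a ``cherry versus different cherry'' case. Your star-versus-cherry argument is essentially the paper's first claim in contrapositive form: both pick an index $\ell$ in the type set $S_1$, use ultrametricity of the vertex $D^{(\ell)}$ to force a tie $D^{(\ell)}_1=D^{(\ell)}_j$, and then observe that the strict inequality at the cherry point would eject $\ell$ from $S_j$, contradicting constancy of the type. Where you genuinely diverge is the two-different-cherries case: the paper stays entirely inside the type formalism, choosing $a\in S_{jk}$ for the pair $jk$ excluded from both minima and deriving that $D^{(a)}_{jk}$ would be a unique maximum, contradicting ultrametricity of the \emph{vertex} $D^{(a)}$; you instead invoke the classical convexity of type cells (a standard fact from the Develin--Sturmfels covector decomposition) and contradict ultrametricity of an interior point $z(t)$ of the segment. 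Your route buys a very transparent computation and only needs ultrametricity of points of $\mathcal P$ in that case, at the cost of importing the external fact that cells are ordinary polyhedra; the paper's route is self-contained and uniform, using only the type data and ultrametricity of the spanning vertices throughout. Both are complete and correct.
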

\begin{proof}
Because the space of ultrametrics $\mathcal U_m$ is a tropical linear space, so is tropically convex, $\mathcal P$ is contained in $\mathcal U_m$. Hence any points $x$ and $y$ in $\mathcal P$ must also be ultrametrics.

Let $S$ be the type of $x$ and $y$. To check whether $x$ and $y$ have the same tree topology, we check the three point condition for each trio of leaves. Fix such a trio $i,j,$ and $k$. Our first claim is that $x_{ij} = x_{ik} = x_{jk}$ if and only if $y_{ij} = y_{ik} = y_{jk}$. To see why, suppose the former is true. Because $x\in\mathcal P$, there exists some index $a\in S_{ij}$, so that
\[D^{(a)}_{ij}-x_{ij}=\max_{\ell_1< \ell_2}D^{(a)}_{\ell_1,\ell_2}-x_{\ell_1,\ell_2}.\]
In particular, $D^{(a)}_{ij}-x_{ij}\ge D^{(a)}_{ik}-x_{ik}$ and $D^{(a)}_{ij}-x_{ij}\ge D^{(a)}_{jk}-x_{jk}$. By assumption it follows that $D^{(a)}_{ij}\ge D^{(a)}_{ik}$ and $D^{(a)}_{ij}\ge D^{(a)}_{ik}.$ Because $D^{(a)}$ is an ultrametric, the maximum among $D^{(a)}_{ij}, D^{(a)}_{ik}, D^{(a)}_{jk}$ is attained twice, so one of these is actually an equality. Without loss of generality, let $D^{(a)}_{ij}=D^{(a)}_{ik}$. Then $D^{(a)}_{ij}-x_{ij}=D^{(a)}_{ik}-x_{ik}$ and $a\in S_{ik}$ as well.

Recall that $S$ is also the type of $y$. This means 
\[D^{(a)}_{ij}-y_{ij}=D^{(a)}_{ik}-y_{ik}=\max_{\ell_1< \ell_2}D^{(a)}_{\ell_1,\ell_2}-y_{\ell_1,\ell_2}.\]
In particular, since $D^{(a)}_{ij}=D^{(a)}_{ik}$, we have $y_{ij}=y_{ik}$ as well. The same argument applied to $S_{jk}$ shows that $y_{jk}=y_{ij}$ or $y_{jk}=y_{ik}$; it follows that $y_{ij}=y_{ik}=y_{jk}$ as desired.

Now suppose $\max(x_{ij},x_{ik},x_{jk})$ and $\max(y_{ij},y_{ik},y_{jk})$ are both attained exactly twice. We claim that the minimum for $x$ and for $y$ is attained for the same pair of leaves. Suppose without loss of generality that $x_{ij}=\min(x_{ij},x_{ik},x_{jk})$ and $y_{ik}=\min(y_{ij},y_{ik},y_{jk})$: because $x,y\in\mathcal P$, there exists some index $a\in S_{jk}$. This implies in particular that $D^{(a)}_{jk}-x_{jk}\ge D^{(a)}_{ij}-x_{ij}$ and $D^{(a)}_{jk}-y_{jk}\ge D^{(a)}_{ik}-y_{ik}$. 

Rearranging these inequalities produces $D^{(a)}_{jk}-D^{(a)}_{ij}\ge x_{jk}-x_{ij}$ and $D^{(a)}_{jk}-D^{(a)}_{ik}\ge y_{jk}-y_{ik}$. Since $x_{ij}<x_{jk}$ and $y_{ik}<y_{jk}$ by assumption, it follows that $D^{(a)}_{jk}$ must be the unique maximum among $D^{(a)}_{ij}, D^{(a)}_{ik}, D^{(a)}_{jk}$, a contradiction because $D^{(a)}$ is ultrametric.
\end{proof}
See Figures \ref{fig:lung} and \ref{fig:apicomplexa} for illustrations of this result. 

It is natural to ask when a tropical PCA contains the origin: i.e., when the fully unresolved phylogenetic tree is contained in the PCA. This question turns out to have a simple answer.
\begin{lemma}
Let $\mathcal P = \tconv(D^{(1)}, \dots, D^{(s)})\subseteq \mathbb R^{e}/\RR{\bf 1}$ be a tropical polytope spanned by ultrametrics. The origin $\bf 0$ is contained in $\mathcal P$ if and only if the path between each pair of leaves $i,j$ passes through the root of some $D^{(i)}$. 
\end{lemma}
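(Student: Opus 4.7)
The plan is to apply the tropical projection formula~\eqref{eq:tropproj} to the origin and read off the required condition directly. Recall that $\mathbf 0\in\mathcal P$ if and only if the projection $\pi_{\mathcal P}(\mathbf 0)$ is equal to $\mathbf 0$ in the tropical projective torus $\mathbb R^e/\mathbb R \mathbf 1$. Specializing the formula to $D = \mathbf 0$ yields
\[\pi_{\mathcal P}(\mathbf 0) \;=\; \bigoplus_{k=1}^s \lambda_k \odot D^{(k)}, \qquad \lambda_k \;=\; \min_{(i,j)}\bigl(-D^{(k)}_{ij}\bigr) \;=\; -M_k,\]
where $M_k := \max_{(i,j)} D^{(k)}_{ij}$ is the diameter of the $k$-th vertex. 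Coordinatewise, $\pi_{\mathcal P}(\mathbf 0)_{ij} = \max_k \bigl(D^{(k)}_{ij} - M_k\bigr)$.

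Next I would observe that each entry of $\pi_{\mathcal P}(\mathbf 0)$ is at most $0$ by definition of $M_k$, and that at least one entry is exactly $0$ since for every $k$ the diameter $M_k$ is realized by some pair $(i,j)$. Therefore $\pi_{\mathcal P}(\mathbf 0)$ represents $\mathbf 0$ in the torus (i.e.\ is a scalar multiple of $\mathbf 1$) if and only if every coordinate equals $0$. Unpacked, this gives the algebraic characterization: $\mathbf 0 \in \mathcal P$ if and only if for every pair of leaves $(i,j)$ there exists an index $k$ with $D^{(k)}_{ij} = M_k$.

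The remaining step is to translate the algebraic condition $D^{(k)}_{ij} = M_k$ into the tree-topological one stated in the lemma. Since $D^{(k)}$ is ultrametric, it corresponds to an equidistant tree of some height $H_k$, and the pairwise distance $D^{(k)}_{ij}$ equals twice the height of the most recent common ancestor (MRCA) of $i$ and $j$. The diameter $M_k = 2H_k$ is therefore attained precisely when the MRCA of $i$ and $j$ coincides with the root of $D^{(k)}$, which is equivalent to the path from leaf $i$ to leaf $j$ passing through the root of $D^{(k)}$. Combining with the previous paragraph delivers the claimed equivalence.

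I do not expect a serious obstacle: the argument is essentially an unpacking of the explicit projection formula. The one place requiring a little care is the projective-quotient step, where both the upper bound $\pi_{\mathcal P}(\mathbf 0)_{ij}\le 0$ and the existence of a coordinate attaining equality are needed to rule out the projection being a nonzero scalar multiple of $\mathbf 1$ rather than the zero vector itself.
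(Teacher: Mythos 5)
Your proof is correct and takes essentially the same route as the paper's: the paper's ``if'' direction is exactly your computation (tropically scale each $D^{(k)}$ so its largest coordinate is $0$ and take the coordinatewise maximum), and its ``only if'' direction extracts the same condition $D^{(k)}_{ij}=M_k$ from an arbitrary tropical representation ${\bf 0}=\bigoplus a_k\odot D^{(k)}$ rather than from the canonical projection $\pi_{\mathcal P}$. Your packaging via the fixed-point property $x\in\mathcal P\iff\pi_{\mathcal P}(x)=x$ is a harmless (and slightly cleaner) reformulation, and your explicit translation of ``maximal coordinate'' into ``MRCA equals the root'' spells out a step the paper leaves implicit.
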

\begin{proof}
The $D^{(i)}$ can certainly be tropically scaled to have largest coordinate 0. If the claimed condition holds, then the sum of these scaled $D^{(i)}$ will be the origin as desired.

Suppose $\bf 0$ is contained in $\mathcal P$, meaning we can write ${\bf 0} =\bigoplus a_i \odot D^{(i)}$. Consider some pair of leaves $i,j$. This must appear as the coordinate of some $a_k\odot D^{(k)}$. Because ${\bf 0}=\bigoplus a_i \odot D^{(i)}$, all other coordinates of $a_k\odot D^{(k)}$ must be non-positive, meaning that the $i,j$ coordinate of $a_k\odot D^{(k)}$ is maximal as desired.
%We must be able to write , \cite{DS}[Theorem 23] states that $\mathcal P$ is isomorphic to another tropical polytope $\mathcal P'$ spanned by $e$ vertices in $\mathbb R^{s}/\mathbb R{\bf 1}$. The map from $\mathcal P\to\mathcal P'$ sends $x$ to $(\max(D^{(1)}-x), \dots, \max(D^{(s)}-x)).$ This means $0\mapsto (2,2,\dots, 2)=0$ because $\max(D^{(i)})=2$ for each spanning point $D^{(i)}.$ The map $\mathcal P'\to \mathcal P$ sends $y\in \mathcal P'$ to the point whose $i$th coordinate is $\max(D^{(1)}_i-y_1, \dots, D^{(s)}_i-y_s)$. Hence $0$ is sent by the composition of maps $\mathcal P\to\mathcal P'\to\mathcal P$ as $0\mapsto0\mapsto D^{(1)}\oplus\cdots\oplus D^{(s)}=0$ because these two maps are inverses.
\end{proof}

\begin{definition}
Suppose we have a sample $\{D^{(1)}, \dots, D^{(n)}\}$. A {\em
  Fermat-Weber point} $x^*$ of \\
$\{D^{(1)}, \dots, D^{(n)}\}$ is a minimizer of the sum of tropical distances to the data points:
\[
x^* := \argmin_x \sum_{i=1}^n d_{\rm tr} (x, D^{(i)}).
\]
We can naturally view Fermat-Weber points as zero-dimensional PCAs.
\end{definition}

\begin{lemma}
Suppose $n \geq 3$ and $ \{D^{(1)}, \ldots ,  D^{(n)} \} \subset \mathcal{U}_m$.
Then there exists a 
Fermat-Weber point $x^*$ of the dataset lying in the tropical polytope $\tconv (D^{(1)},\dots, D^{(n)})$. In particular, this point $x^*$ is ultrametric.
\end{lemma}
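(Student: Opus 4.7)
The plan is to produce a Fermat-Weber point in $\tconv(D^{(1)},\dots,D^{(n)})$ by starting from any Fermat-Weber point and projecting it onto this tropical polytope via the map $\pi_{\mathcal P}$ given in equation~\eqref{eq:tropproj}. The key analytic fact I will need is that $\pi_{\mathcal P}$ is non-expansive in the tropical metric, i.e.\ $d_{\rm tr}(\pi_{\mathcal P}(x), \pi_{\mathcal P}(y)) \le d_{\rm tr}(x,y)$ for all $x,y \in \RR^e/\RR\mathbf 1$, together with the fact that $\pi_{\mathcal P}$ fixes each vertex $D^{(i)}$ (which is immediate from the projection formula, since taking $\lambda_i = 0$ and $\lambda_k$ arbitrarily negative for $k\neq i$ recovers $D^{(i)}$).

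Before invoking these, I will first confirm that a Fermat-Weber point actually exists. The function $F(x)=\sum_{i=1}^n d_{\rm tr}(x, D^{(i)})$ is continuous on $\RR^e/\RR\mathbf 1$, and because each summand grows linearly as $x$ moves away from $D^{(i)}$ along any direction in the quotient, $F$ is coercive on $\RR^e/\RR\mathbf 1$. Hence $F$ attains its infimum at some point $x_0$.

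Given such an $x_0$, set $x^* = \pi_{\mathcal P}(x_0)\in \mathcal P$. Applying non-expansiveness and the fact that $\pi_{\mathcal P}(D^{(i)}) = D^{(i)}$, I get
\[
d_{\rm tr}(x^*, D^{(i)}) = d_{\rm tr}(\pi_{\mathcal P}(x_0), \pi_{\mathcal P}(D^{(i)})) \le d_{\rm tr}(x_0, D^{(i)})
\]
for each $i$. Summing over $i$ yields $F(x^*) \le F(x_0)$, and since $x_0$ was a minimizer, equality must hold. Thus $x^*$ is itself a Fermat-Weber point and lies in $\mathcal P$ by construction. The final assertion that $x^*$ is ultrametric then follows from the theorem of Develin-Sturmfels recalled above: $\mathcal U_m$ is a tropical linear space, hence tropically convex, so the tropical convex hull of the ultrametric vertices $D^{(i)}$ is contained in $\mathcal U_m$, and in particular $x^*\in \mathcal P\subseteq \mathcal U_m$.

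The main obstacle is verifying non-expansiveness of $\pi_{\mathcal P}$; this is a standard fact in tropical geometry (see e.g.\ \cite{MS} or \cite{AGNS}), but I would either cite it or give a short direct proof using the explicit formula $\lambda_k(x)=\min(x-D^{(k)})$, noting that each map $x\mapsto \min(x-D^{(k)})$ is 1-Lipschitz in $d_{\rm tr}$ and that the tropical sum $\bigoplus \lambda_k \odot D^{(k)}$ preserves this Lipschitz bound coordinatewise.
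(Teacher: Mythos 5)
Your proof is correct, but it follows a genuinely different route from the paper's. The paper argues locally with types: it takes a Fermat--Weber point $x^*$ outside the polytope, observes that an empty type $S_j$ means $D^{(i)}_j - x^*_j$ is never maximal for any $i$, so the $j$th coordinate can be decreased without increasing any distance $d_{\rm tr}(x^*, D^{(i)})$, and repeats until every type is nonempty, landing in $\mathcal P$. You instead apply the projection $\pi_{\mathcal P}$ of \eqref{eq:tropproj} to an arbitrary Fermat--Weber point and invoke non-expansiveness plus the fact that $\pi_{\mathcal P}$ fixes the vertices; this gives $F(\pi_{\mathcal P}(x_0)) \le F(x_0)$, so the projection is again a minimizer. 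Your key fact is genuine and short to verify: writing $\lambda_k(x) = \min_j(x_j - D^{(k)}_j)$, one has $\min_j(x_j-y_j) \le \lambda_k(x)-\lambda_k(y) \le \max_j(x_j-y_j)$, and since $\max_k a_k - \max_k b_k$ lies between $\min_k(a_k-b_k)$ and $\max_k(a_k-b_k)$, each coordinate of $\pi_{\mathcal P}(x)-\pi_{\mathcal P}(y)$ lies in $[\min_j(x_j-y_j), \max_j(x_j-y_j)]$, which is exactly $d_{\rm tr}(\pi_{\mathcal P}(x),\pi_{\mathcal P}(y)) \le d_{\rm tr}(x,y)$; this is also the non-expansiveness result in the cited works \cite{CGQ,AGNS}. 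The one loose step is your justification that $\pi_{\mathcal P}(D^{(i)}) = D^{(i)}$: the coefficients $\lambda_k$ in \eqref{eq:tropproj} are determined by the formula, not chosen, so you cannot ``take $\lambda_k$ arbitrarily negative''; the correct argument is the one-line computation that $\lambda_i(D^{(i)})=0$ while $\lambda_k(D^{(i)}) + D^{(k)}_j \le D^{(i)}_j$ for all $k,j$ (or the general fact that $\pi_{\mathcal P}$ restricts to the identity on $\mathcal P$). What your approach buys is a cleaner and slightly stronger statement --- the projection of \emph{any} Fermat--Weber point is again one --- at the cost of importing the non-expansiveness lemma; the paper's argument is more elementary and self-contained but requires the bookkeeping with types. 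Both reach the final ultrametricity claim the same way, via tropical convexity of $\mathcal U_m$.
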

\begin{proof}
Take $x^*$ a Fermat-Weber point not lying in $\tconv (D^{(1)},\dots, D^{(n)})$, and let $S = (S_1,\dots, S_e)$ be its vector of types as in Definition \ref{def:types}. Because $x^*$ does not lie in the tropical polytope, some of the types $S_j$ are empty. Consider such an $S_j$. By definition, we have that for each $i$, $D^{(i)}_j-x^*_j$ is not maximal among $\{D^{(i)}_1-x_1^*,\dots, D^{(i)}_e-x_e^*\}$. This also means that we can shift the $j$th coordinate without changing the distance of $x^*$ to any datapoint $D^{(i)}$.  We can therefore simply decrease $x^*_e$ until there is some $i$ such that $D_j^{(i)}-x_j^*$ is tied for being maximal among the coordinates of $D^{(i)}-x^*$. The tropical type $S_j$ of our new $x^*$ will be nonempty. By doing so for all coordinates, we obtain a new Fermat-Weber point which lies in the tropical polytope spanned by ultrametrics and so is itself ultrametric.
\end{proof}

The previous lemma states that there always exists a biologically interpretable zero-dimensional tropical PCA for a dataset of ultrametrics. This result points toward the following conjecture, which is analogous to the classical fact that the $s$-dimensional PCA is contained in the $t$-dimensional PCA if $s\le t$.  
\begin{conjecture}
There exists a tropical Fermat-Weber point $x^*\in\mathcal U_m$ of a
sample $D^{(1)}, \dots, D^{(n)}$ of ultrametric trees which is
contained in the $s$th order tropical PCA of the dataset for $s
\geq 1$.
\end{conjecture}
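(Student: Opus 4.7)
The plan is to extend the argument of the previous lemma---which produces a Fermat--Weber point lying inside $\tconv(D^{(1)},\dots,D^{(n)})\subseteq\mathcal U_m$---by coupling it with the optimality characterization of the tropical PCA through an exchange argument. Fix an $s$th order tropical PCA $\mathcal P = \tconv(V_1,\dots,V_{s+1})$ and let $x^*\in\mathcal U_m\cap\tconv(D^{(1)},\dots,D^{(n)})$ be the Fermat--Weber point supplied by the previous lemma. Suppose for contradiction that $x^*\notin\mathcal P$, and write $x' := \pi_{\mathcal P}(x^*)$ for its tropical projection, computed via \eqref{eq:tropproj}.

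The core step is to produce from $\mathcal P$ a new tropical polytope $\mathcal P'$ with exactly $s+1$ vertices, one of which is $x^*$, such that $\Pi_{\mathcal P'}(S)\le\Pi_{\mathcal P}(S)$; by optimality of $\mathcal P$ this forces equality, so $\mathcal P'$ is itself an $s$th order PCA containing $x^*$. A natural candidate is the replacement polytope $\mathcal P_k := \tconv(V_1,\dots,\widehat{V_k},\dots,V_{s+1},x^*)$. First I would use the Fermat--Weber optimality $\sum_i d_{\rm tr}(x^*,u_i)\le\sum_i d_{\rm tr}(x',u_i)$ together with the triangle inequality and the nonexpansiveness of tropical projection to bound $\sum_i d_{\rm tr}(u_i,\pi_{\mathcal P_k}(u_i))$, and then argue that for some choice of $k$---for instance the vertex $V_k$ whose removal least perturbs the scaling coefficients $\lambda_j$ appearing in \eqref{eq:tropproj}---this sum does not exceed $\Pi_{\mathcal P}(S)$.

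The hardest step will be controlling the global reshuffling of the cell types of Definition \ref{def:types}: replacing a single generator of $\mathcal P$ with $x^*$ can simultaneously alter the type vectors $(S_1,\dots,S_e)$ for many data points $u_i$ at once, and the change is not individually monotonic in $k$. Making the exchange argument rigorous will require combinatorial bookkeeping of how the types of $\pi_\mathcal{P}(u_i)$ reshuffle when $V_k$ is swapped for $x^*$, exploiting the fact that $x^*$ is itself ultrametric so that its insertion respects the ultrametric cell decomposition guaranteed by the earlier theorem. If this direct route proves intractable, a second plan is to first establish that the full set $\mathcal F$ of Fermat--Weber points is tropically convex, and then to deform $\mathcal P$ along a continuous path of $s$-PCAs that slides one vertex into $\mathcal F$ while keeping $\Pi$ non-increasing, in the spirit of the coordinate-decrease technique used in the preceding lemma.
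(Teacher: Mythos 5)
The statement you are tackling is presented in the paper as a \emph{conjecture}: the authors give no proof, and the only result they establish in its direction is the preceding lemma (existence of a Fermat--Weber point inside $\tconv(D^{(1)},\dots,D^{(n)})$, hence ultrametric). So there is no paper proof to compare against, and your proposal must stand on its own. As written it does not: it is a plan whose central step is asserted rather than proved, and that step is essentially the whole difficulty.

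The concrete gap is the exchange inequality $\Pi_{\mathcal P_k}(S)\le\Pi_{\mathcal P}(S)$ for some $k$. Fermat--Weber optimality only gives $\sum_i d_{\rm tr}(x^*,u_i)\le\sum_i d_{\rm tr}(y,u_i)$ for a single competitor point $y$; it says nothing about the residuals $d_{\rm tr}\bigl(u_i,\pi_{\mathcal Q}(u_i)\bigr)$ of projections onto a polytope $\mathcal Q$, which can be far smaller than $d_{\rm tr}(u_i,x^*)$ because the projection point varies with $i$. Deleting a vertex $V_k$ can strictly increase the residual of every data point whose projection lies in a cell incident to $V_k$, and inserting $x^*$ --- which is optimal only ``on average'' over the whole sample --- provides no pointwise compensation; the triangle inequality and nonexpansiveness of $\pi_{\mathcal P}$ yield bounds in the wrong direction (upper bounds on $\Pi_{\mathcal P_k}$ in terms of quantities that already exceed $\Pi_{\mathcal P}$). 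Note also that even if the exchange succeeded, it would show that \emph{some} optimal polytope contains $x^*$, not that the given $\mathcal P$ does, so you would need to settle which reading of the conjecture you are proving. Your fallback route has the same character: tropical convexity of the Fermat--Weber set is itself not established (what is known is that it is an ordinary convex polytope), and ``slide one vertex into $\mathcal F$ while keeping $\Pi$ non-increasing'' restates the conjecture rather than arguing for it. The approach is not unreasonable as a research direction, but every load-bearing step is currently an open problem, so this is a proposal for attacking the conjecture, not a proof of it.
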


\section{Methods}
\label{sec:meth}\label{tropPCA}\label{MCMC}

In this section we discuss how to estimate the optimal
solution to a tropical PCA via a Markov Chain Monte Carlo (MCMC).  This can be applied to estimate the $(s-1)$-th order principal components
for $s \geq 1$.

In the remainder of this paper we often focus on $s = 3$ for
simplicity, even though our techniques apply for any $s \geq 3$.
Finding the 3rd order tropical PCA can be written as the following
optimization problem:
\begin{problem}\label{optimization}
We seek a solution for the following optimization problem:
\[%\begin{equation}\label{cost:convex}
\min_{D^{(1)}, D^{(2)}, D^{(3)} \in \Tn} \sum_{i = 1}^n  d_{\rm
  tr}(u_i, u'_i)
\]%\end{equation}
where
\begin{equation}\label{const1:convex}
u'_i= \lambda_1^i \odot  D^{(1)} \,\oplus \,
\lambda_2^i \odot  D^{(2)} \,\oplus \,
\lambda_3^i \odot  D^{(3)}  ,
\quad {\rm where} \,\, \lambda_k^i = {\rm min}(u_i-D^{(k)}) ,
\end{equation}
and 
\begin{equation}\label{const2:convex}
d_{\rm
  tr}(u_i, u'_i) = \max\{|u_i(k) - u'_i(k) - u_i(l) + u'_i(l)|: 1 \leq
k < l \leq e\}
\end{equation}
with 
\begin{equation}\label{const3:convex}
u_i = (u_i(1), \ldots , u_i(e)) \text{ and } u'_i = (u'_i(1), \ldots , u'_i(e)).
\end{equation}
\end{problem}
 \bigskip 

%\section{Markov Chain Monte Carlo for estimation}\label{MCMC}

Let $\Pi_{\Delta}(S):= \sum_{i = 1}^{|S|}d_{tr}(u_i, u'_i)$, where $S = \{u_1,
\ldots u_n\}$ with $u_i \in \RR^e/\RR {\bf 1}$ are ultrametrics and
$u'_i$ is the tropical projection of $u_i$ onto a tropical
triangle $\Delta$.

The following algorithm computes a proposal state, i.e., a set of
proposed trees.  
\begin{algorithm}[Finding the proposal set of trees]\label{MCMC1}  \qquad
\begin{itemize}
\item Input: Set of equidistant trees $\{T_1, T_2, T_3\}$, $k \in [m]$.
\item Output: Next set of equidistant trees $\{T'_1, T'_2, T'_3\}$.
\end{itemize}
\begin{algorithmic}
\For{\texttt{$i = 1, \ldots , 3$}} 
	\State  Set $T'_i = T_i$.
	\State  Pick random numbers $(i_1, \ldots , i_k) \subset [m]$
without replacement.
	\State Permute the tree leaf labels $(i_1, \ldots , i_k) \subset
        [m]$ of $T'_i$ with a random permutation $\sigma$ in the
        symmetric group on $\{i_1, \ldots , i_k\}$.
	\State Pick a random internal branch $b_1$ in $T'_i$ with  branch length $l_i$.
	\State Update $l_i := l_i + \epsilon \cdot c$ where $\epsilon
        \sim Unif\{\pm 1\}$, and $c \sim Unif[0, l_i/m]$.
        \State Pick another branch $b_2$ with branch length $l$ on the path from the root to a leaf where the branch $b_1$ is also on the path. 
       \State If $l-\epsilon \cdot c < 0$ then set $l := 0$ and $l_i := l_i + l - \epsilon
       \cdot c$.  If not then set $l:= l - \epsilon \cdot c$.
\EndFor
\State Return $\{T'_1, T'_2, T'_3\}$.
\end{algorithmic}
\end{algorithm}

Next we use the Metropolis algorithm to decide whether the proposal state
should be accepted or rejected.  Let $\Delta_{(w_1, w_2, w_3)}$ be the tropical triangle spanned
by $w_1, w_2, w_3$.

\begin{algorithm}[Metropolis algorithm]\label{MCMC2}  \qquad
\begin{itemize}
\item Input: Current set of equidistant trees $\{T_1, T_2, T_3\}$ and
  the proposal state, $\{T'_1, T'_2, T'_3\}$.  The sample of
  ultrametrics $S = \{u_1, \ldots u_n\}$.
\item Output: Decision whether we should accept the proposal or not.  
\end{itemize}
\begin{algorithmic}
\State Compute ultrametrics $w_1, w_2, w_3$, from $T_1, T_2, T_3$,
respectively.
\State Compute ultrametrics $v_1, v_2, v_3$, from $T'_1, T'_2, T'_3$,
respectively.
\State Compute $\Pi_{\Delta_{w_1, w_2, w_3}}(S)$ and
$\Pi_{\Delta_{v_1, v_2, v_3}}(S)$.
\State Set $p = \min\{1, 
\Pi_{\Delta_{w_1, w_2, w_3}}(S)/ \Pi_{\Delta_{v_1, v_2, v_3}}(S)\}$.
\State Accept a proposal $\{T'_1, T'_2, T'_3\}$ with probability $p$.
\end{algorithmic}
\end{algorithm}

Piecing together Algorithms \ref{MCMC1} and \ref{MCMC2}, we have the following
MCMC algorithm.

\begin{algorithm}[MCMC algorithm to estimate the second order
  principal components]\label{MCMC3}  \qquad
\begin{itemize}
\item Input: Sample of equidistant trees $\{T_1, \ldots , T_n\}$.
  Constant positive integer $C > 0$.
\item Output: Second order principal components $\{T^*_1, T^*_2, T^*_3\}$.
\end{itemize}
\begin{algorithmic}
\State Set $S := \{u_1, \ldots , u_n\}$ where $d_i$ is the
ultrametrics computed from a tree $T_i$, for $i = 1, \ldots , n$.
\State Pick random trees $\{T_0^1, T_0^2, T_0^3\} \subset \{T_1,
\ldots , T_n\}$ and compute ultrametrics $w^*_1, w^*_2, w^*_3$ respectively.
\State Set $i = 1$, $k = m$, where $m$ is the number of leaves.
\Repeat
       \If{\texttt{$i$ mod $C$ equals zero and $k > 0$}}
       \State  set $k = k - 1$.
       \EndIf
       \State Compute the proposal $\{T_1^1, T_1^2, T_1^3\}$ via
       Algorithm \ref{MCMC1} with $\{T_0^1, T_0^2, T_0^3\}$ and $k$.
      \State Compute ultrametrics $w_1, w_2, w_3$, from $T_1^1, T_1^2, T_1^3$,
respectively.
       \If{\texttt{Algorithm \ref{MCMC2} returns ``accept''}}  
       \State Set $T_0^1 = T_1^1$, $T_0^2 = T_1^2$, and $T_0^3 = T_1^3$.
       \EndIf
       \If{\texttt{$\Pi_{\Delta_{w_1, w_2, w_3}}(S) < \Pi_{\Delta_{w^*_1, w^*_2,
             w^*_3}}(S)$}} 
        \State set $w^*_1:= w_1, \, w^*_2 := w_2, \, w^*_3:= w_3$.
      \EndIf
       \State Set $i = i + 1$.
\Until{\texttt{Converges}}
\State Return the ultrametrics $w^*_1, w^*_2, w^*_3$.
\end{algorithmic}
\end{algorithm}

\subsection{Fraction of variance of unexplained and variance of explained}

To analyze the fit of a tropical PCA to the observed data, we used
a fraction of variance of unexplained $\Pi_{\Delta}(S)$ and variance of explained
in terms of our tropical geometric set up.  In tropical geometry, it is
natural to use a Fermat-Weber point as a centroid of the given
datasets \cite{LSTY}, and correspondingly we will use a sum of tropical distances instead of the
squared distances.  
Let $S_{reg}$ be the ``variance of explained'',  defined as 
\[
S_{reg} = \sum_{i=1}^n d_{tr}(\hat{u}_i, \bar{u})
\]
where $\hat{u}_i$ is the tropical projection of an ultrametric $u_i$
for a tree $T_i$ in the input sample onto a tropical polytope and
$\bar{u}$ is a Fermat Weber point of $\{\hat{u}_i, \ldots ,
\hat{u}_n\}$ as defined in Section \ref{sec:PCA-props}.

We define the fraction of
variance of unexplained as 
\[
\frac{\Pi_{\Delta}(S)}{\Pi_{\Delta}(S) + S_{reg}}.
\]

Here the coefficient of determination (or the proportion of the
variance of explained), or $R^2$, is defined as
\begin{equation}\label{r2def}
R^2 = 1 - \frac{\Pi_{\Delta}(S)}{\Pi_{\Delta}(S) + S_{reg}} = \frac{S_{reg}}{\Pi_{\Delta}(S) + S_{reg}}.
\end{equation}
We use the proportion of determination $R^2$ as the statistic to
measure how well the model fits to the given data.  
% \hl{which notation}
% \hl{which one is explained and which one is unexplained?}

\section{Verifications}
\label{sec:verify}\label{sec:simulations}

\subsection{Mixture of coalescent models}

For the first simulation study, we generated gene trees with a species tree
under a coalescent model via the software {\tt Mesquite} \cite{mesquite}.  We fixed
the effective population size $N_e = 100,000$ and varied
\[
r = \frac{SD}{N_e}
\]
where $SD$ is the species depth.

\begin{algorithm}[Sample gene trees from a coalescent model]\label{al1}  \qquad
\begin{itemize}
\item Input: The number of gene trees $n$, labels of leaves $\{1,
  \ldots , m\}$, effective population size $N_e$ and species depth $SD$.
\item Output: A sample of $n$ gene trees with a fixed species
  tree $T_s$ under a coalescent model.
\end{itemize}
\begin{algorithmic}
\State Generate a species tree $T_s$ with the label  $\{1,
  \ldots , m\}$ and $N_e$ under Yule model. 
\State Generate $n$ many gene trees with $T_s$ and $N_e$ under the
coalescent model via {\tt Mesquite}.
\State Return the gene trees generated and $T_s$.
\end{algorithmic}
\end{algorithm}

%\subsection{Simulation Experiments}

%Here we conduct two sets of experiments.  
This experiment is to
generate two distributions of gene trees under the coalescent model with
different species trees using Algorithm \ref{al1}. Then we use
Algorithm \ref{al2} to compute a tropical PCA on the mixture of these
two distributions generated.   In these simulated
experiments, we have varied the ratio $r = 0.25, 0.5, \, 
1, \, 2, \, 5, \, 10$.  We also fix the number of leaves $m = 10$.  

\begin{algorithm}[PCA with two distributions of gene trees with
  difference species trees]\label{al2}  \qquad
\begin{itemize}
\item Input:  $S_1 := \{T_1, \ldots , T_n\}$, a sample of gene trees
  generated with a species tree $T_{S_1}$ and $S_2 := \{T'_1, \ldots ,
  T'_n\}$, a sample of gene trees 
  generated with a species tree $T_{S_2}$ where $T_{S_1} \not = T_{S_2}$.
\item Output: PCA (second PCs) with a sample $\{T_1, \ldots , T_n,
  T'_1, \ldots , T'_n\}$.
\end{itemize}
\begin{algorithmic}
\State Apply a tropical PCA and compute the second order principal
components with $\{T_1, \ldots , T_n,
  T'_1, \ldots , T'_n\}$. 
\State Color red for the projected points of $\{T_1, \ldots , T_n\}$
onto the tropical PCA and color blue for for the projected points of
$\{T'_1, \ldots , T'_n\}$ 
onto the tropical PCA.
\end{algorithmic}
\end{algorithm}

In order to compare our results, we also applied the same simulated
data sets to {\tt geophytter} which approximate the BHV PCA on the
BHV tree space \cite{NTWY}.
To compare the accuracy rates we use $R^2$, the variance of
explained. Note that the $R^2$ statistic for the BHV PCA is defined in an analogous way to our $R^2$, simply replacing the tropical distance with the BHV distance.

\begin{figure}
    \centering
    \begin{subfigure}[b]{0.3\textwidth}
        \includegraphics[width=0.9\textwidth]{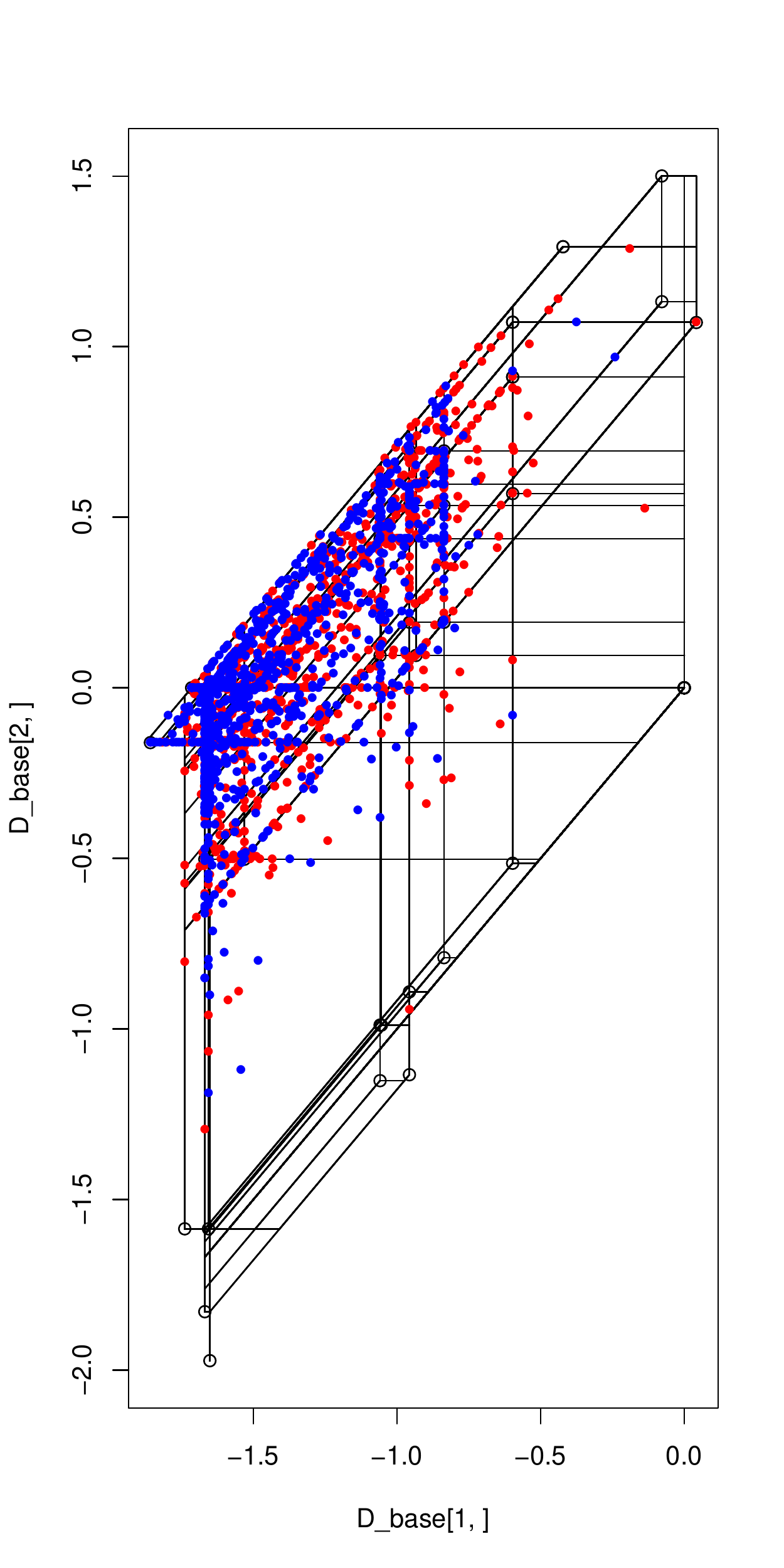}
        \caption{$r = 0.25$.}
        \label{fig:TroS2025}
    \end{subfigure}
    ~ %add desired spacing between images, e. g. ~, \quad, \qquad, \hfill etc. 
      %(or a blank line to force the subfigure onto a new line)
    \begin{subfigure}[b]{0.3\textwidth}
        \includegraphics[width=0.9\textwidth]{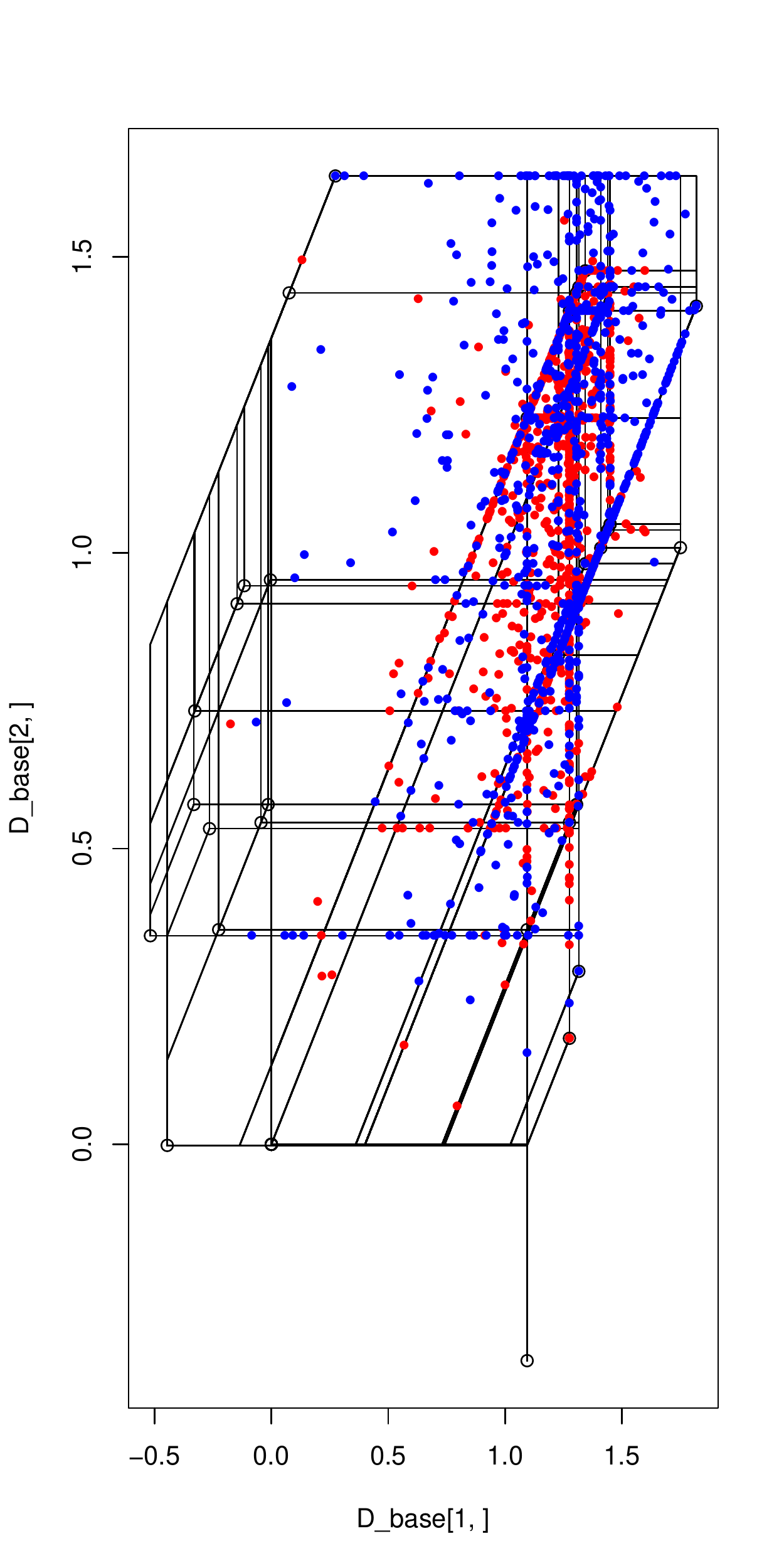}
        \caption{$r = 0.5$.}
        \label{fig:TroS205}
    \end{subfigure}
    ~ %add desired spacing between images, e. g. ~, \quad, \qquad, \hfill etc. 
    %(or a blank line to force the subfigure onto a new line)
    \begin{subfigure}[b]{0.3\textwidth}
        \includegraphics[width=0.9\textwidth]{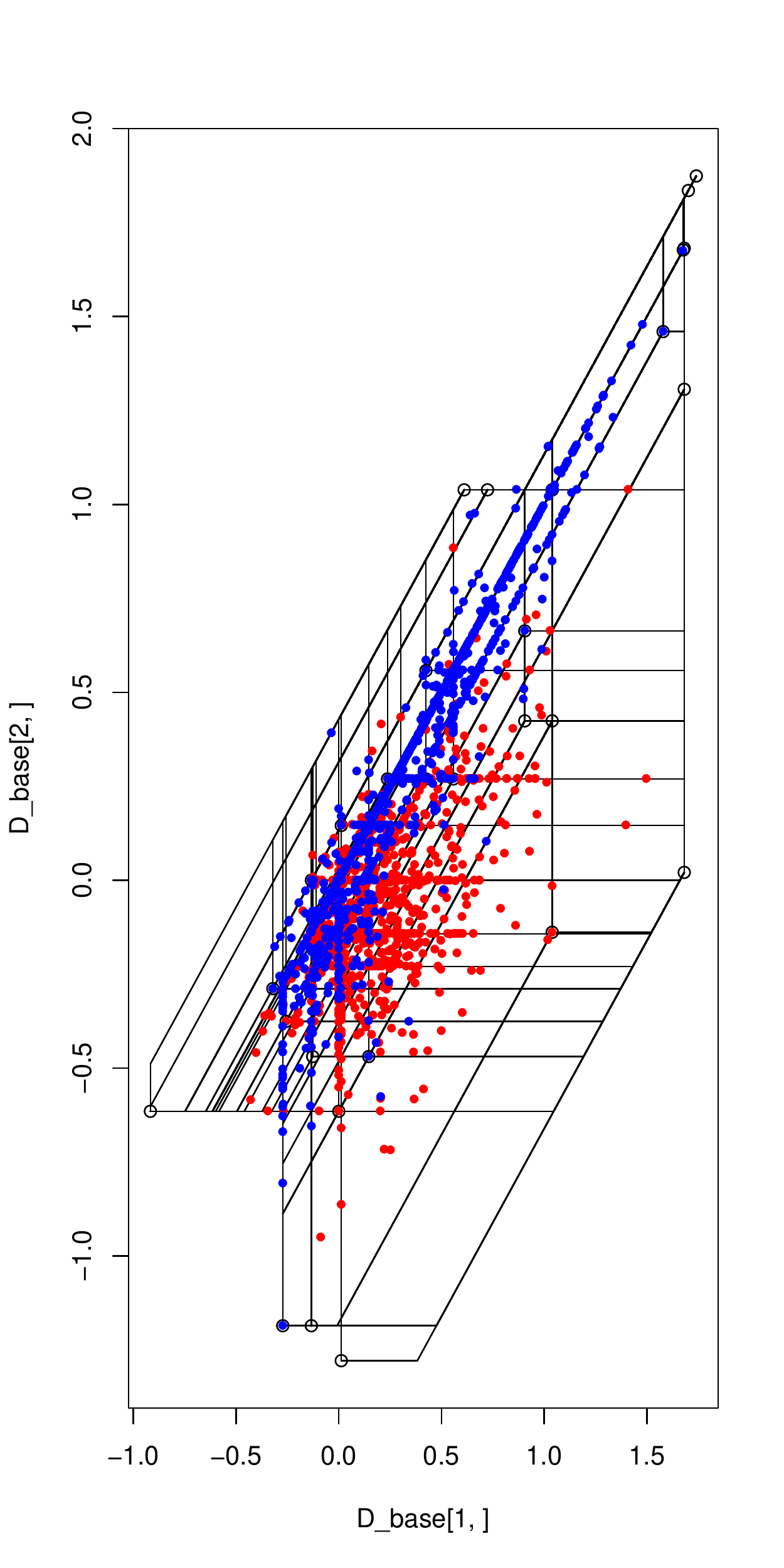}
        \caption{$r = 1$.}
        \label{fig:TroS21}
    \end{subfigure}
 ~
    \begin{subfigure}[b]{0.3\textwidth}
        \includegraphics[width=0.9\textwidth]{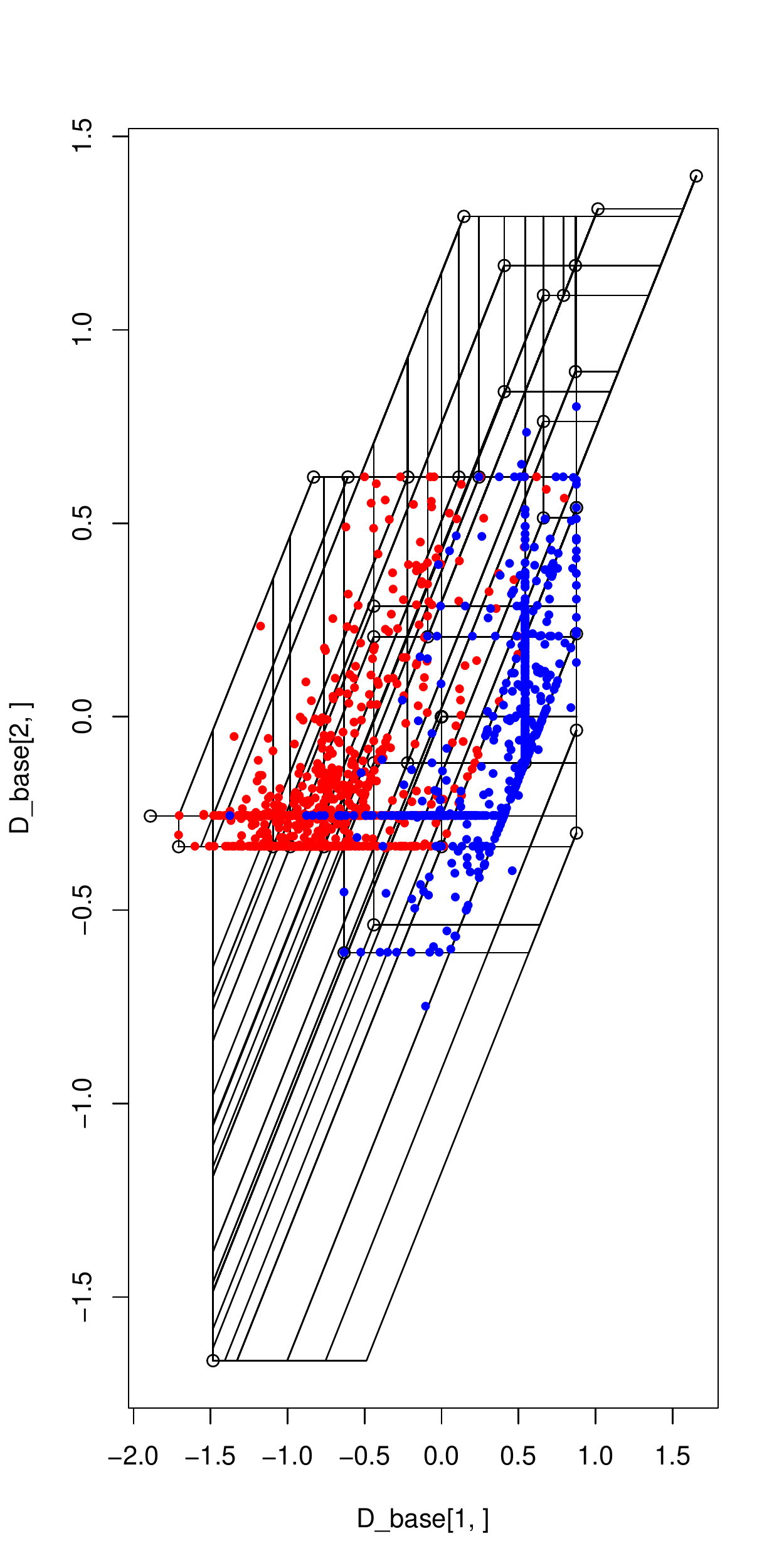}
        \caption{$r = 2$.}
        \label{fig:TroS22}
    \end{subfigure}
    ~ %add desired spacing between images, e. g. ~, \quad, \qquad, \hfill etc. 
      %(or a blank line to force the subfigure onto a new line)
    \begin{subfigure}[b]{0.3\textwidth}
        \includegraphics[width=0.9\textwidth]{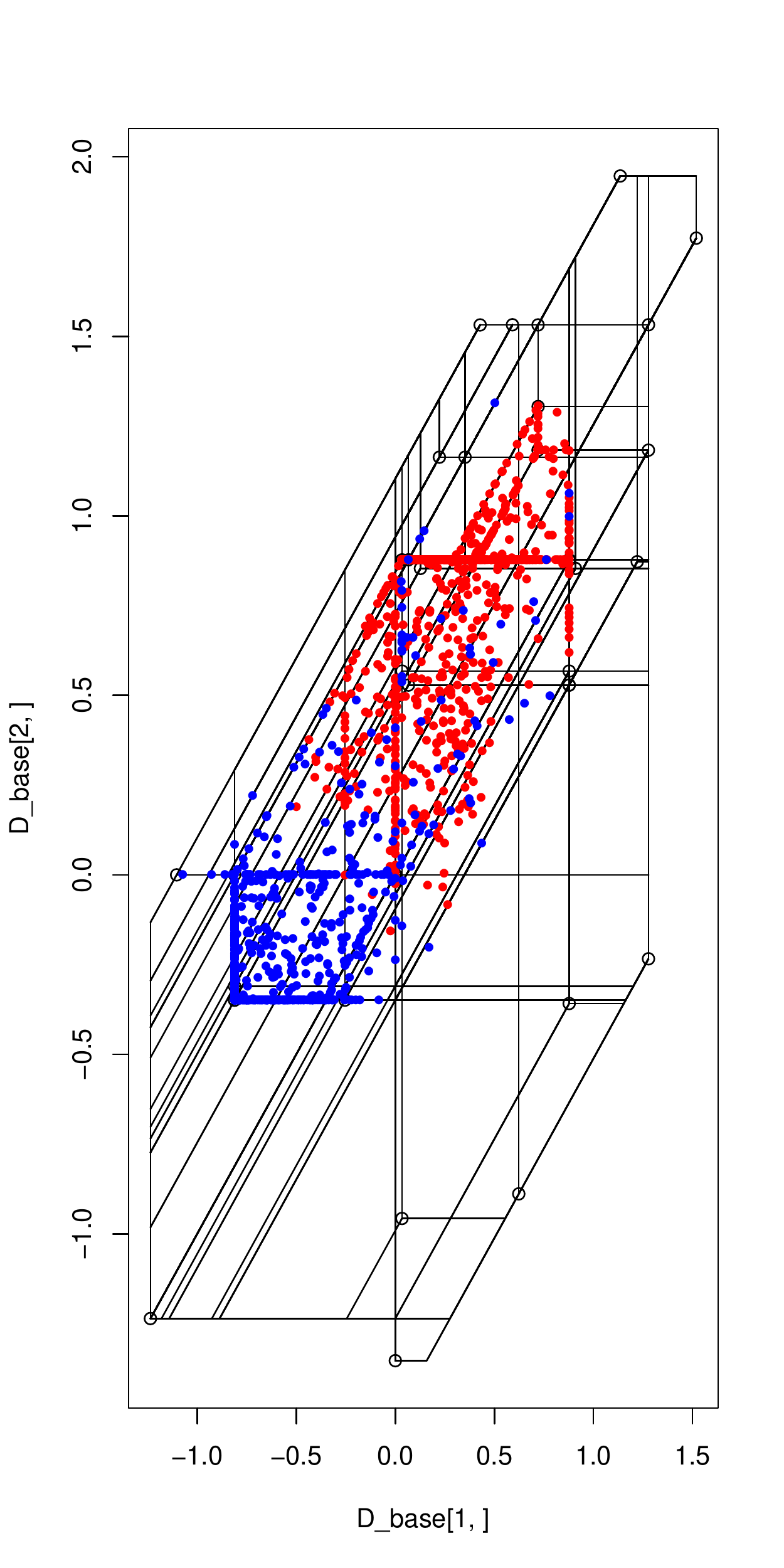}
        \caption{$r = 5$.}
        \label{fig:TroS25}
    \end{subfigure}
    ~ %add desired spacing between images, e. g. ~, \quad, \qquad, \hfill etc. 
    %(or a blank line to force the subfigure onto a new line)
    \begin{subfigure}[b]{0.3\textwidth}
        \includegraphics[width=0.9\textwidth]{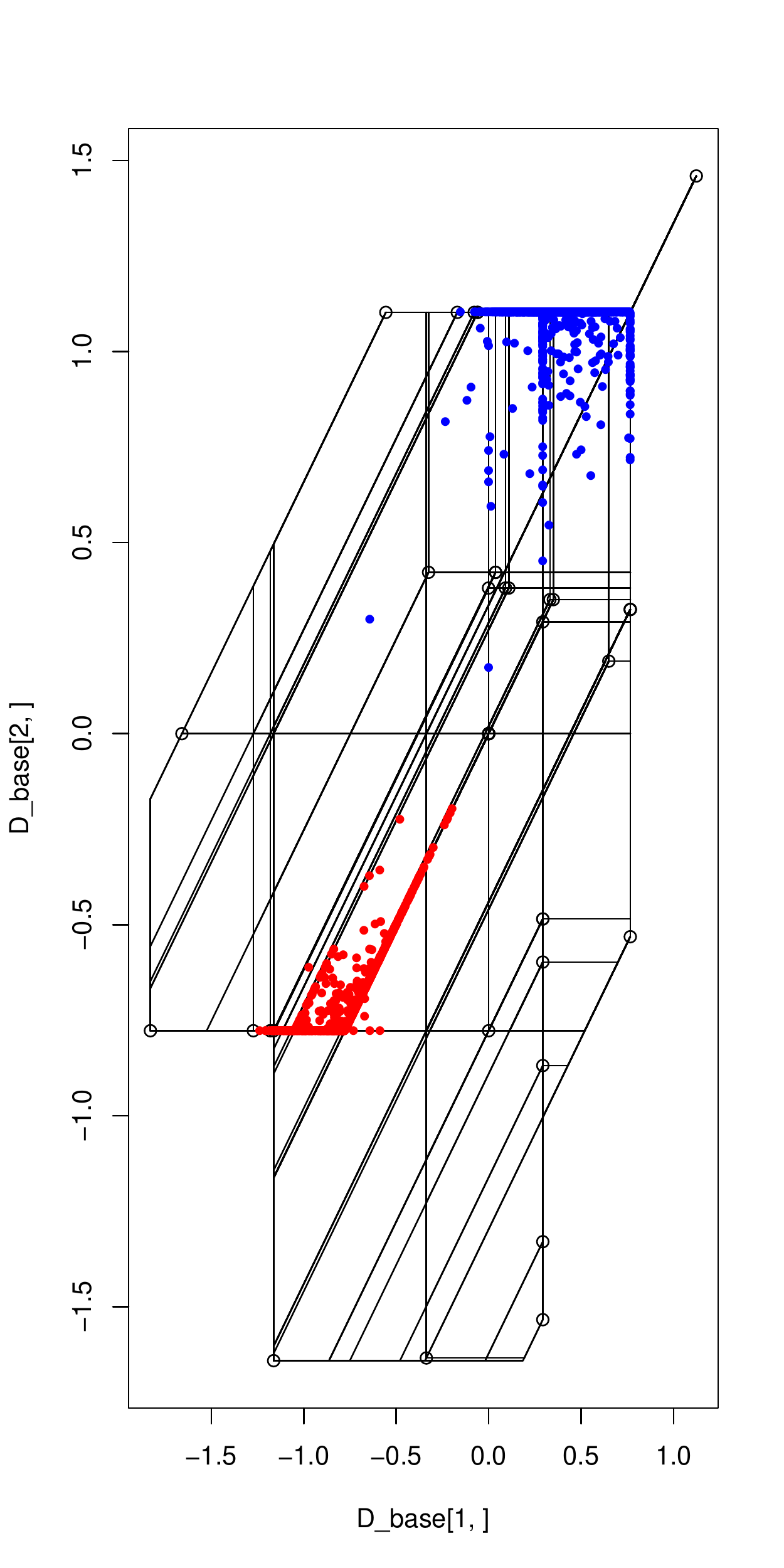}
        \caption{$r = 10$.}
        \label{fig:TroS210}
    \end{subfigure}
    \caption{We applied tropical PCAs on the mixture of two coalescent
      distributions using Algorithm \ref{al1}.  We colored blue for
      projected trees whose gene trees 
          are generated from one coalescent distribution and red for
          the other distribution. We varied the ratio
          $r$.}\label{fig:troPCA}
\end{figure}

\begin{figure}
    \centering
    \begin{subfigure}[b]{0.3\textwidth}
        \includegraphics[width=0.9\textwidth]{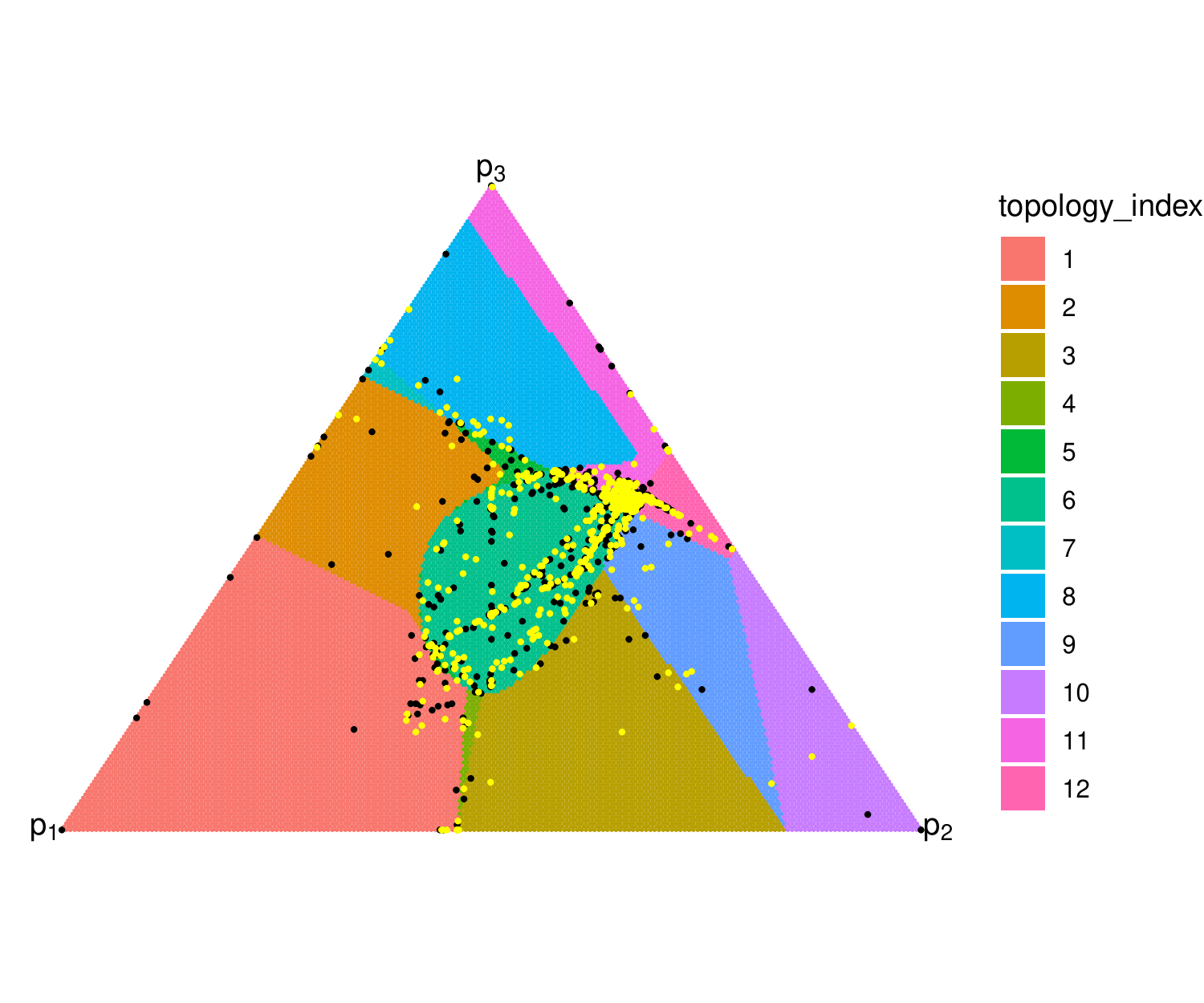}
        \caption{$r = 0.25$.}
        \label{fig:BHVS2025}
    \end{subfigure}
    ~ %add desired spacing between images, e. g. ~, \quad, \qquad, \hfill etc. 
      %(or a blank line to force the subfigure onto a new line)
    \begin{subfigure}[b]{0.3\textwidth}
        \includegraphics[width=0.9\textwidth]{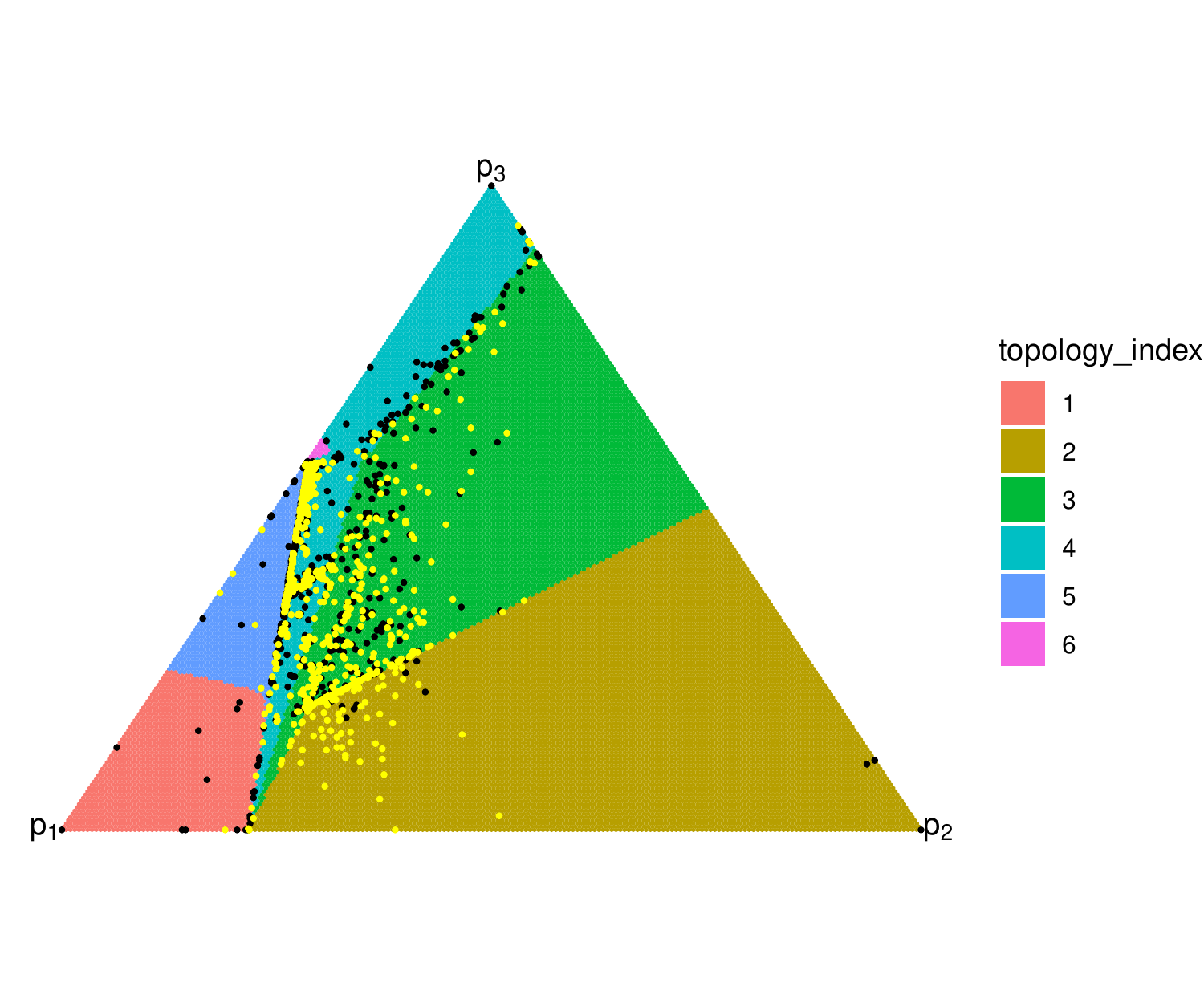}
        \caption{$r = 0.5$.}
        \label{fig:BHVS205}
    \end{subfigure}
    ~ %add desired spacing between images, e. g. ~, \quad, \qquad, \hfill etc. 
    %(or a blank line to force the subfigure onto a new line)
    \begin{subfigure}[b]{0.3\textwidth}
        \includegraphics[width=0.9\textwidth]{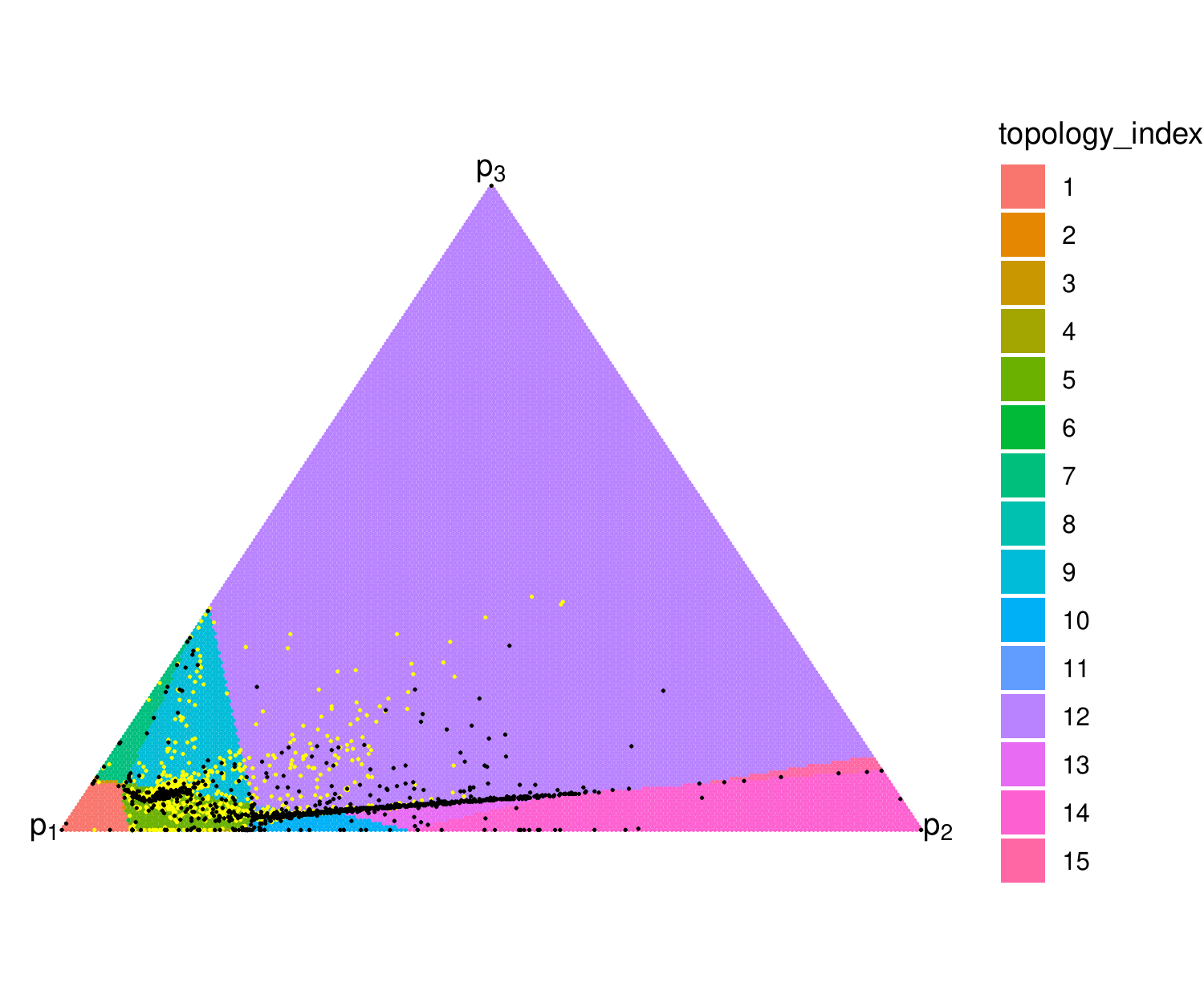}
        \caption{$r = 1$.}
        \label{fig:BHVS21}
    \end{subfigure}
 ~
    \begin{subfigure}[b]{0.3\textwidth}
        \includegraphics[width=0.9\textwidth]{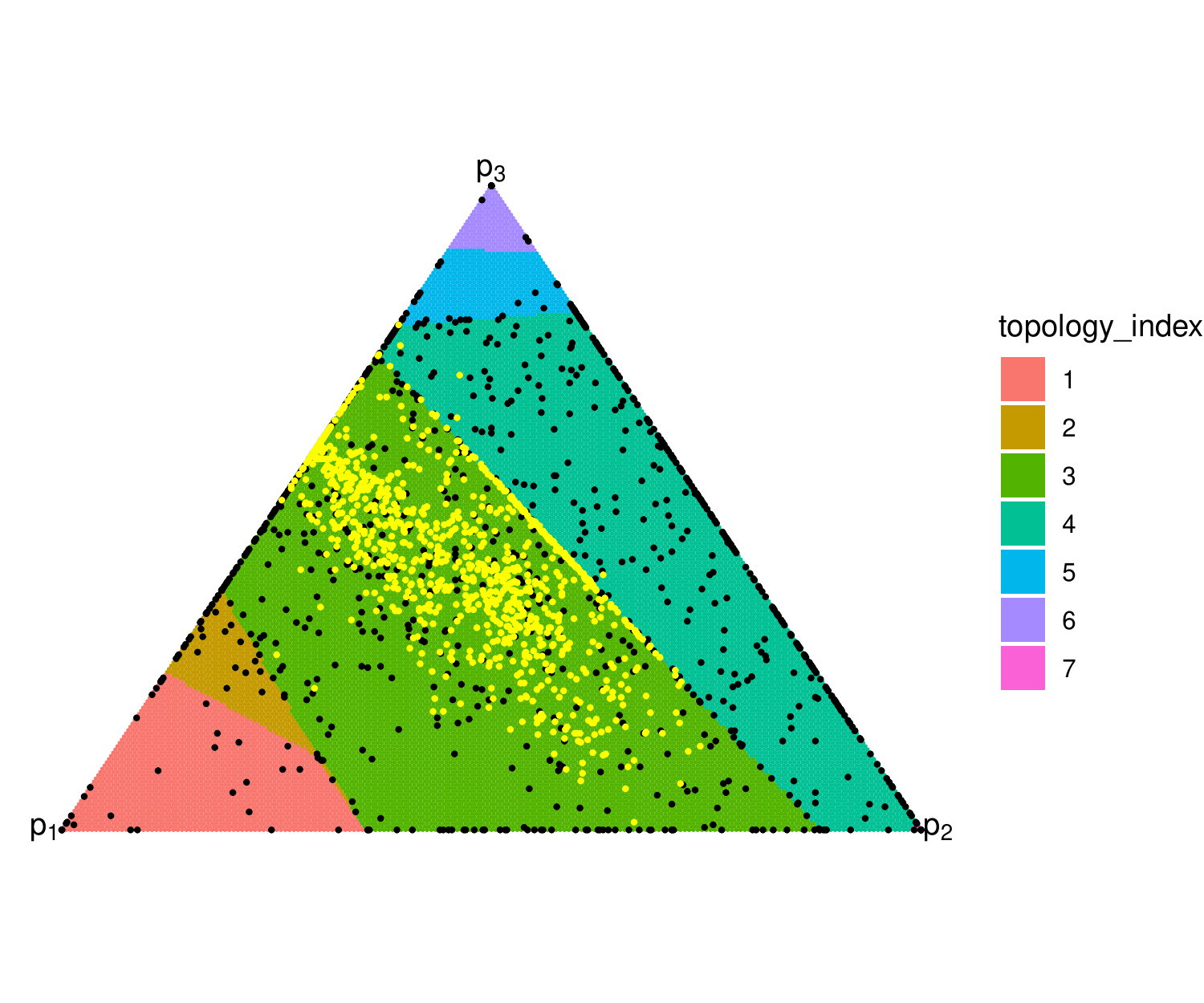}
        \caption{$r = 2$.}
        \label{fig:BHVS22}
    \end{subfigure}
    ~ %add desired spacing between images, e. g. ~, \quad, \qquad, \hfill etc. 
      %(or a blank line to force the subfigure onto a new line)
    \begin{subfigure}[b]{0.3\textwidth}
        \includegraphics[width=0.9\textwidth]{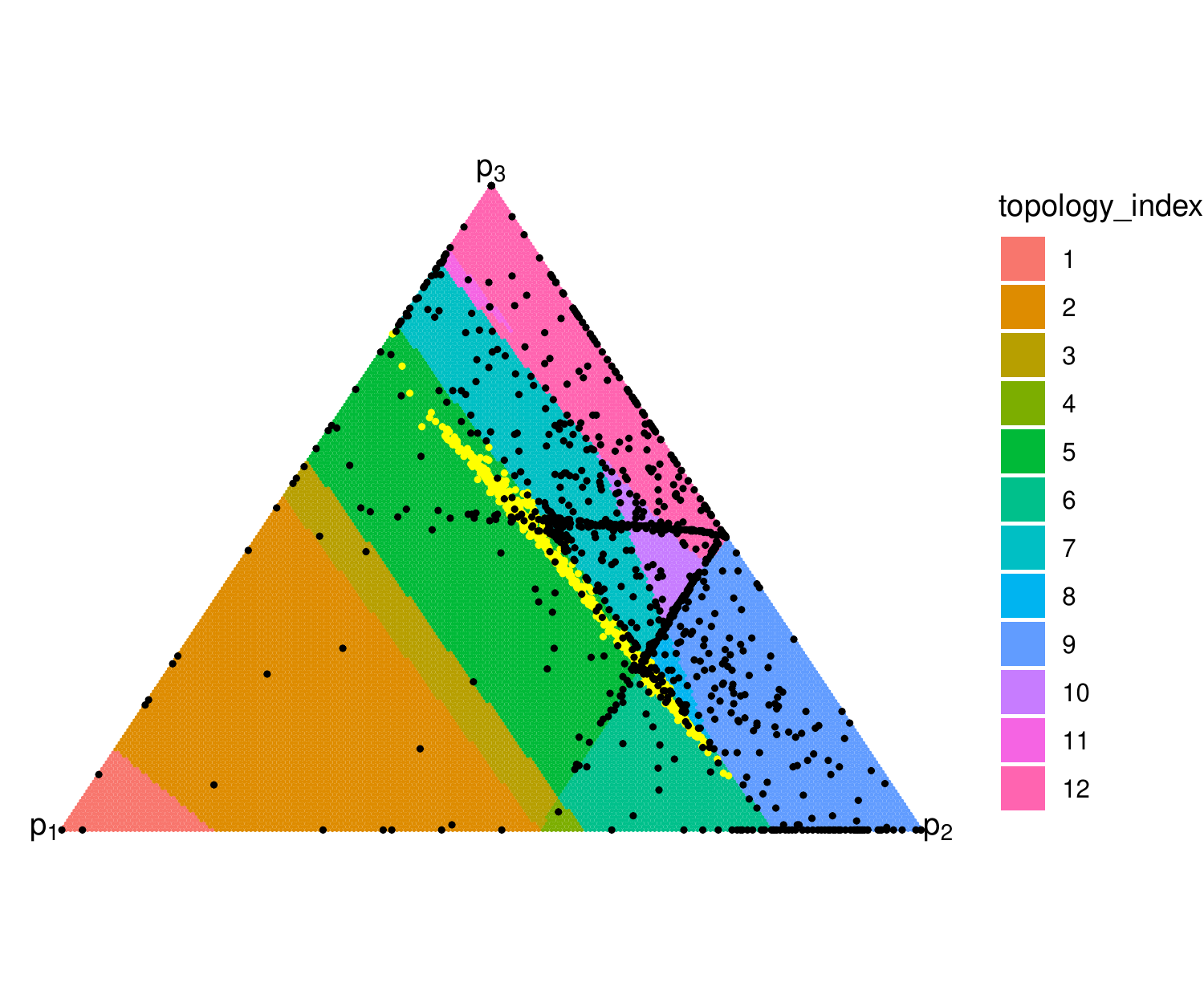}
        \caption{$r = 5$.}
        \label{fig:BHVS25}
    \end{subfigure}
    ~ %add desired spacing between images, e. g. ~, \quad, \qquad, \hfill etc. 
    %(or a blank line to force the subfigure onto a new line)
    \begin{subfigure}[b]{0.3\textwidth}
        \includegraphics[width=0.9\textwidth]{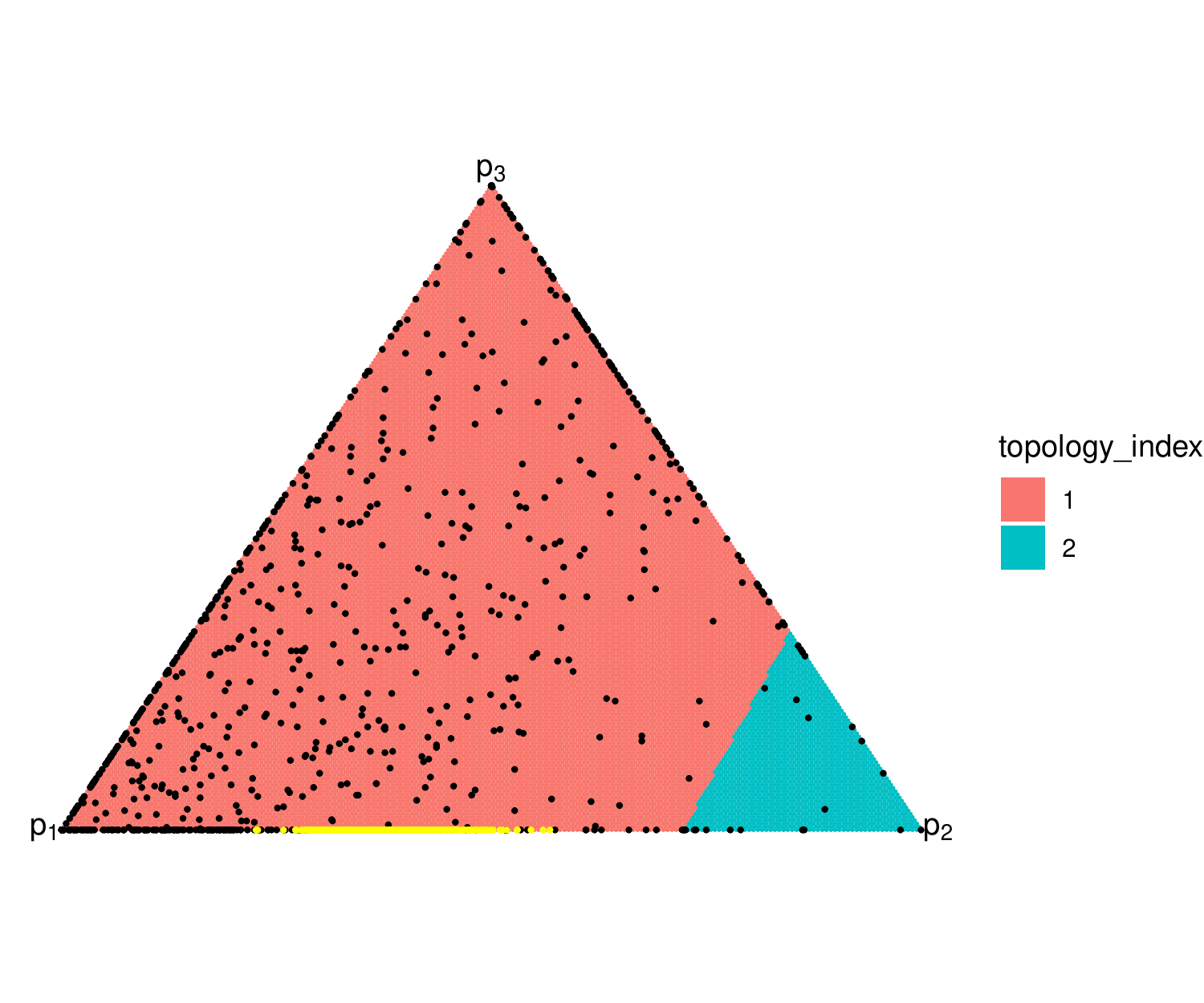}
        \caption{$r = 10$.}
        \label{fig:BHVS210}
    \end{subfigure}
    \caption{We applied BHV PCAs on the mixture of two coalescent
      distributions.  We colored black for
      projected trees whose gene trees 
          are generated from one coalescent distribution and yellow for
          the other distribution. We varied the ratio
          $r$.}\label{fig:BHVPCA}
\end{figure}

\begin{table}[h!]
\begin{center}
\begin{tabular}{|c|c|c|}
\hline
$r$ & Tropical PCA  & BHV PCA  \\ \hline
 $0.25$ & $0.316$ &  $0.009$ \\ \hline
 $0.5$ & $0.297$ &  $0.009$    \\ \hline
 $1$& $0.186$ &   $0.042$  \\ \hline
 $2$& $0.319$ &   $0.034$  \\ \hline
 $5$&  $0.278$ &   $0.009$  \\ \hline
 $10$& $0.396$ &   $0.009$  \\ \hline
\end{tabular}
\caption{$R^2$ for the tropical PCAs and the BHV PCAs for the mixtures
of two coalescent distributions. }
\end{center}
\end{table}

\subsection{Sensitivity analysis}

For the second simulation study, we worked on sensitivity analysis of
our MCMC method for estimating tropical PCA from the given data.  Also
with simulated data we study a convergence rate of our MCMC approach. 

For a sensitivity analysis, we run our MCMC
approach 10 times on the same set of data (fixed the number of
leaves, the number of trees, and the number of iterations for each MCMC) and see how the vertices of the tropical PCA
and corresponding $R^2$ change.  %We compare vertices of a tropical triangle for
%each run with the maximum and minimum of Robinson-Foulds distances as
%well as tropical distances for each pair of vertices from different runs.  
For a convergence rate, we plot $R^2$ varying the number of iterations
on each MCMC run.  

We conducted these simulations on a set of random tree topologies and a
set of datapoints of fixed tree topology.  The parameters for this simulation
study are listed in Table \ref{tab1}.  For each case, we ran ten Markov
chains.  

\begin{table}[h!]
\begin{center}
\begin{tabular}{|c|c|}\hline
 Parameter&   \\ \hline
 Tree topology& random or fixed tree topology  \\\hline
 Number of leaves& $4, \ldots , 9$ \\\hline
 Number of trees & 5, 25, 50, 100, 1000\\\hline
 Number of iterations & 10, 100, 1000, 10000\\\hline
\end{tabular}
\caption{Parameters set up for the second simulation
  study.}\label{tab1}
\end{center}
\end{table}

\subsubsection{Fixed tree topology}

We fixed the equidistant  tree topology shown in Figure \ref{fig:fixedtree}.  The
biggest external edge from the leaf 1 to its root has its branch
length 1.  
\begin{figure}
    \centering
        \includegraphics[width=0.3\textwidth]{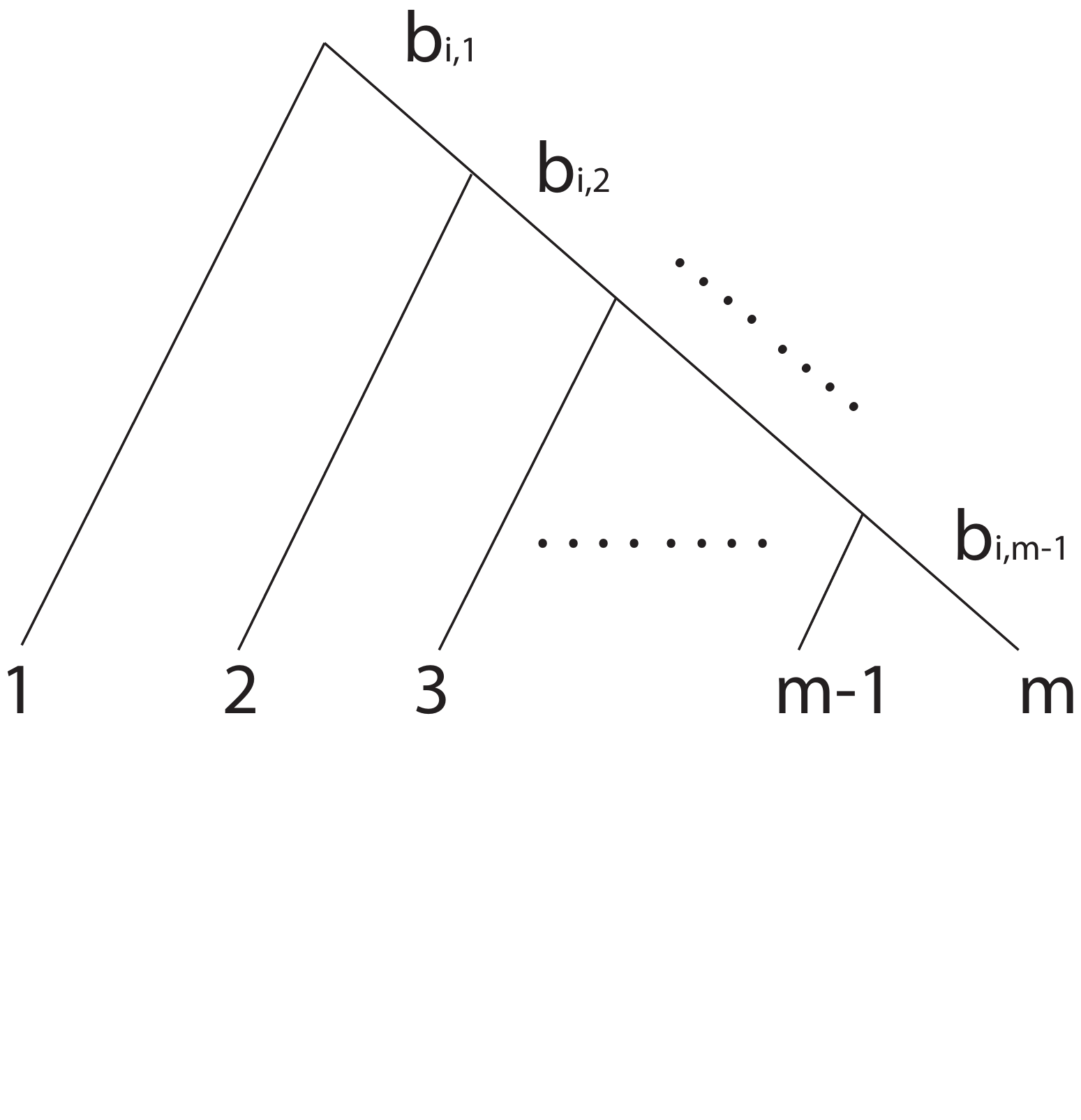}
%\vskip -1in
    \caption{The fixed equidistant  tree topology for the second simulation study.}\label{fig:fixedtree}
\end{figure}

We generate a set of random equidistant trees with this fixed tree topology shown
in Figure  \ref{fig:fixedtree} by Algorithm \ref{randtree}.

\begin{algorithm}[Generating random equidistant trees with the fixed tree topology
  shown in \ref{fig:fixedtree}]\label{randtree}  \qquad
\begin{itemize}
\item Input: The number of leaves $m$ and sample size $n$.
\item Output: $n$ many random trees with the fixed tree topology with
  $m$ leaves.
\end{itemize}
\begin{algorithmic}
\For{$i = 1, \ldots , n$}
       \State Generate an equidistant tree $T_i$ with its topology shown in
       \ref{fig:fixedtree}.
       \State Let $b_{i, 1}, \ldots , b_{i, m-1}$ be interior edges in
       $T_i$ shown in Figure \ref{fig:fixedtree}.  Let $l(b_{i, j})$
       is the branch length of the edge $b_{i, j}$. 
       \For{$j = 1, \ldots , m-1$}
               \State Pick a random number $u$ uniformly $[0,
               1-(\sum_{k=1}^{j-1}l(b_{i, k}))]$.
               \State Assign $l(b_{i, j}) = u$.
         \EndFor
       \For{$j = 1, \ldots , m-1$}
               \State Assign the branch length for the external edge
               adjacent to a leaf $j$ equal to $\sum_{k =
                 j}^{m-1}l(b_{i, j})$.
         \EndFor
\EndFor
\State Return $\{T_1, \ldots , T_n\}$.
\end{algorithmic}
\end{algorithm}

The simulation results can be found in Figure \ref{fig:sens1} below.  Note
that even though we fix the tree topology the branch lengths are
random.  Therefore, in general $R^2$ should be low.  In addition, from
the figure note that the more iterations for a Markov chain correlates with a higher $R^2$.   Also from
the figure the variation of $R^2$ is smaller when the
number of iterations increases.  This
implies that a MCMC is converging to a local or global optima.  Since the
variance is very small for some cases, a Markov chain seems to converge to a
global optima for these cases.  From this
experiments, a Markov chain is quickly converging to a local or global
optima very quickly.  Also in general when the number of leaves is smaller
we have higher $R^2$ because the dimension of the tree space is
smaller; similarly when the number of trees is smaller we also have a higher $R^2$.
\begin{figure}[!h]
    \centering
    \begin{subfigure}[b]{0.3\textwidth}
        \includegraphics[width=0.9\textwidth]{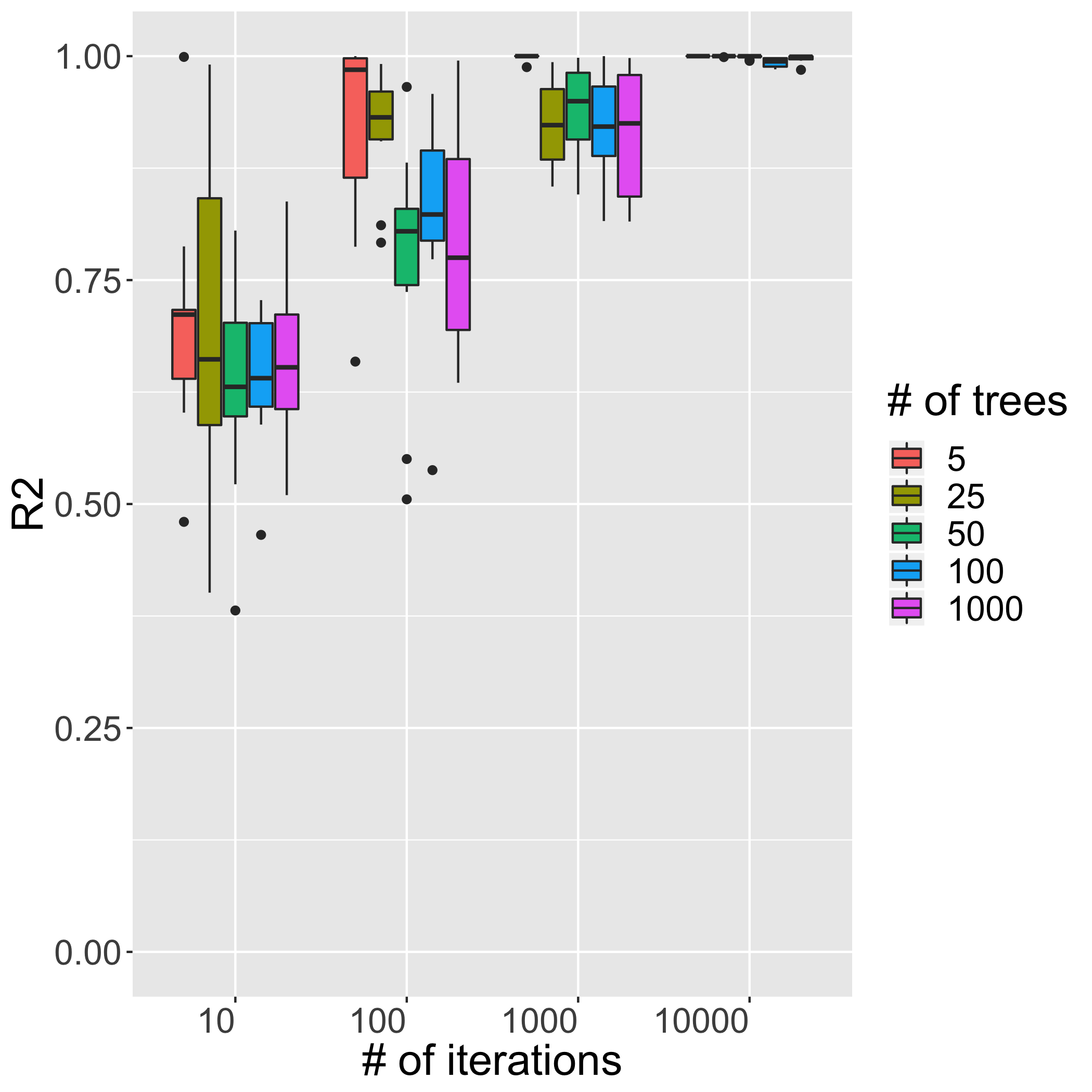}
        \caption{Leaves ($m$) = 4.}
        \label{fig:same4}
    \end{subfigure}
    ~ %add desired spacing between images, e. g. ~, \quad, \qquad, \hfill etc. 
      %(or a blank line to force the subfigure onto a new line)
    \begin{subfigure}[b]{0.3\textwidth}
        \includegraphics[width=0.9\textwidth]{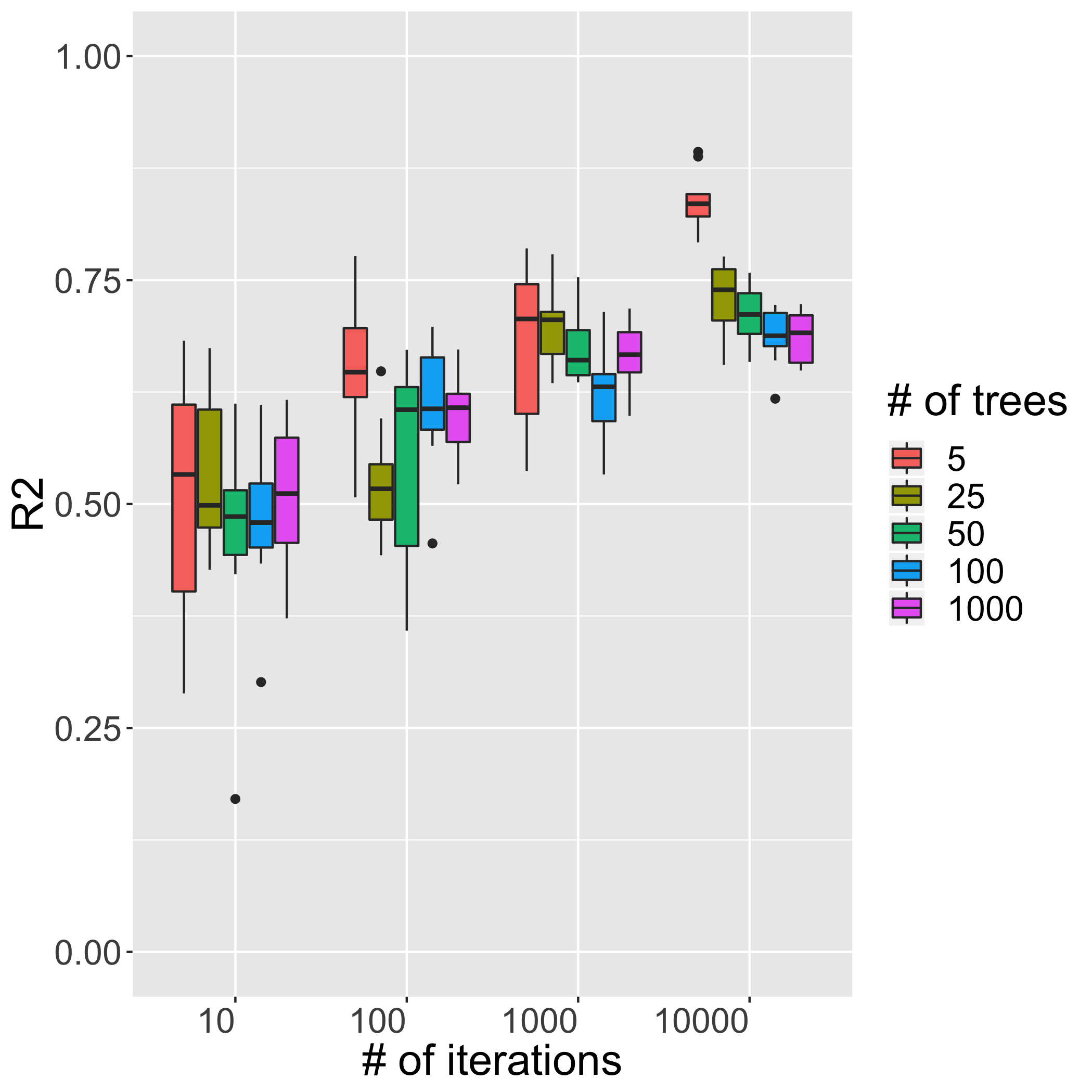}
        \caption{Leaves ($m$) = 5.}
        \label{fig:same5}
    \end{subfigure}
    ~ %add desired spacing between images, e. g. ~, \quad, \qquad, \hfill etc. 
    %(or a blank line to force the subfigure onto a new line)
    \begin{subfigure}[b]{0.3\textwidth}
        \includegraphics[width=0.9\textwidth]{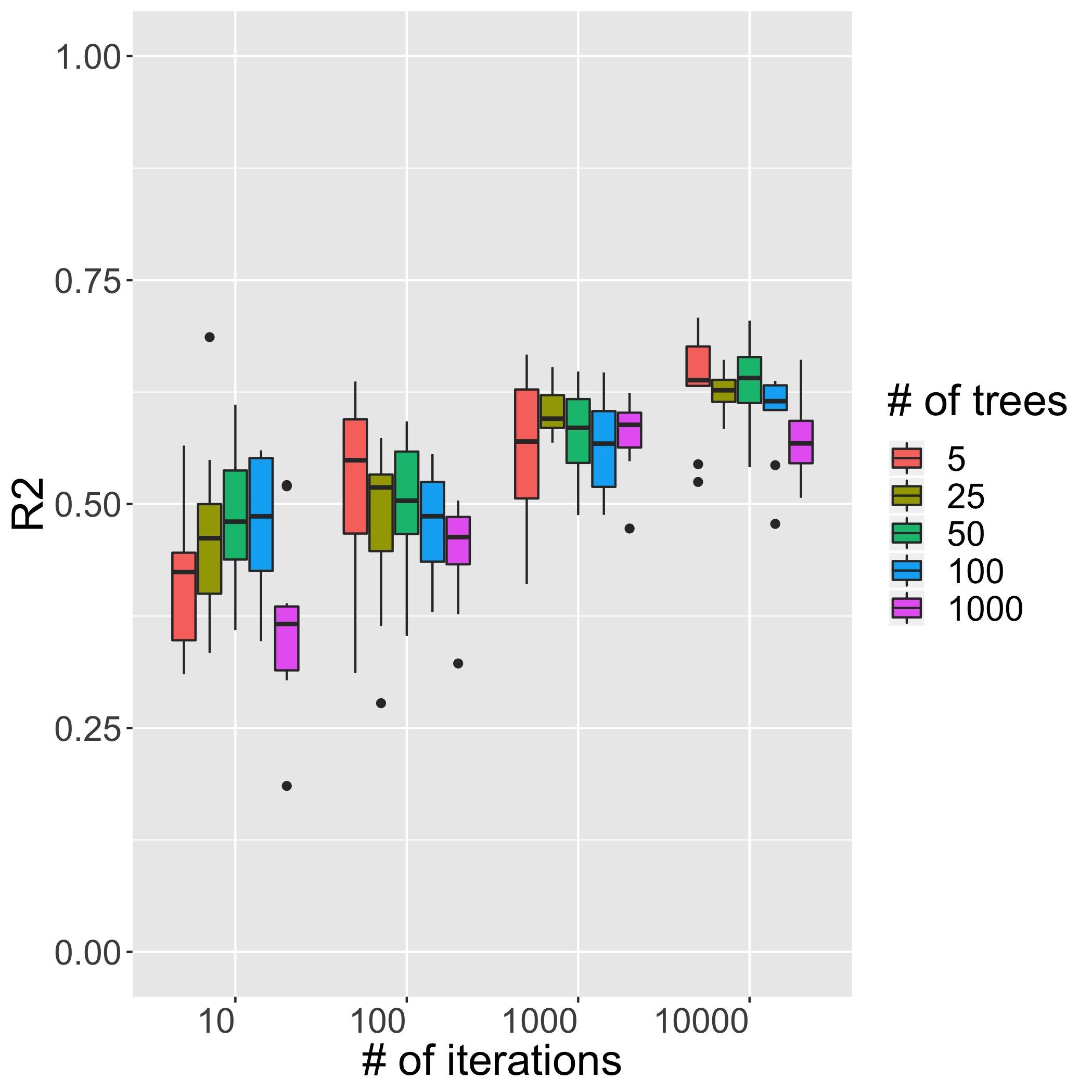}
        \caption{Leaves ($m$) = 6.}
        \label{fig:same6}
    \end{subfigure}
 ~
    \begin{subfigure}[b]{0.3\textwidth}
        \includegraphics[width=0.9\textwidth]{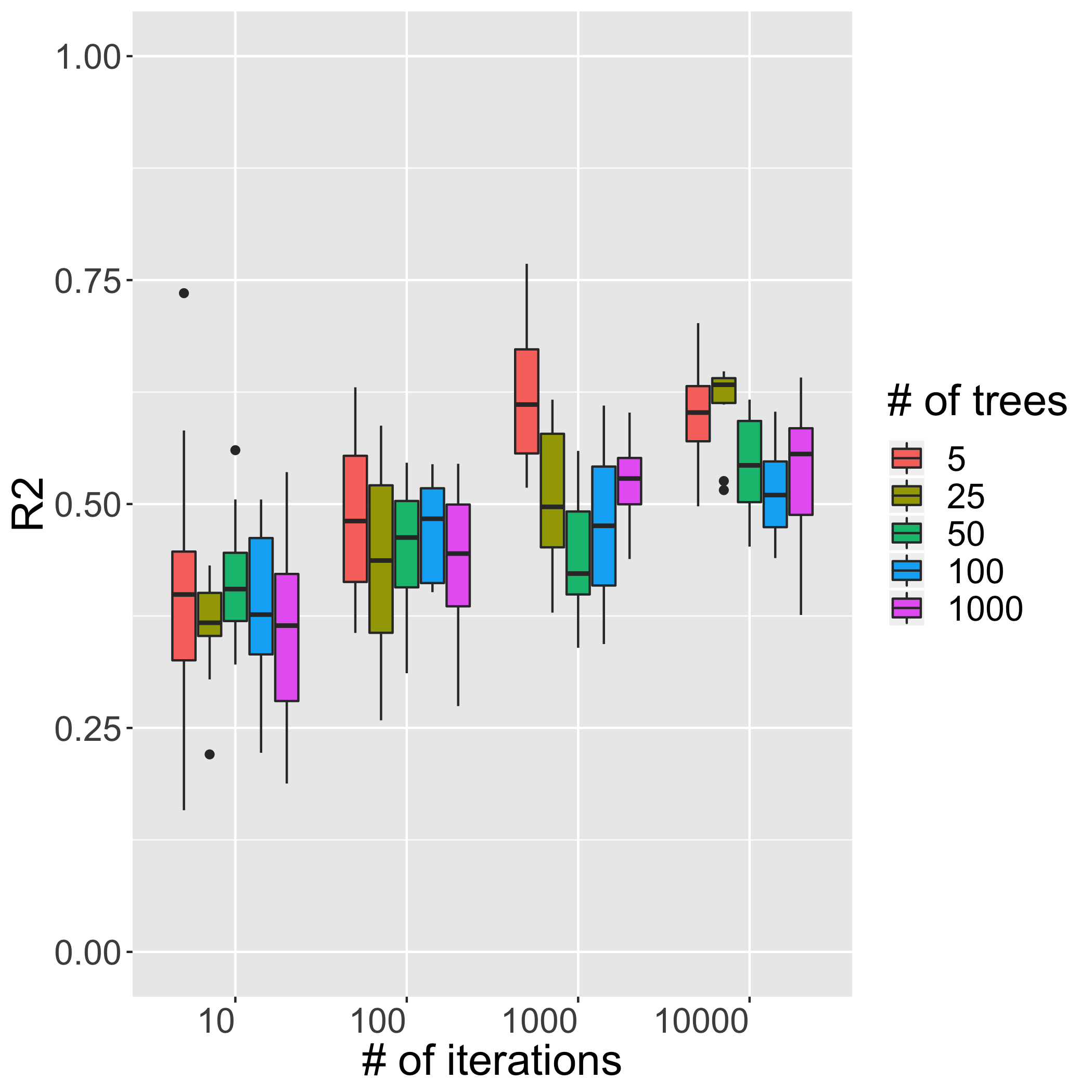}
        \caption{Leaves ($m$) = 7.}
        \label{fig:same7}
    \end{subfigure}
    ~ %add desired spacing between images, e. g. ~, \quad, \qquad, \hfill etc. 
      %(or a blank line to force the subfigure onto a new line)
    \begin{subfigure}[b]{0.3\textwidth}
        \includegraphics[width=0.9\textwidth]{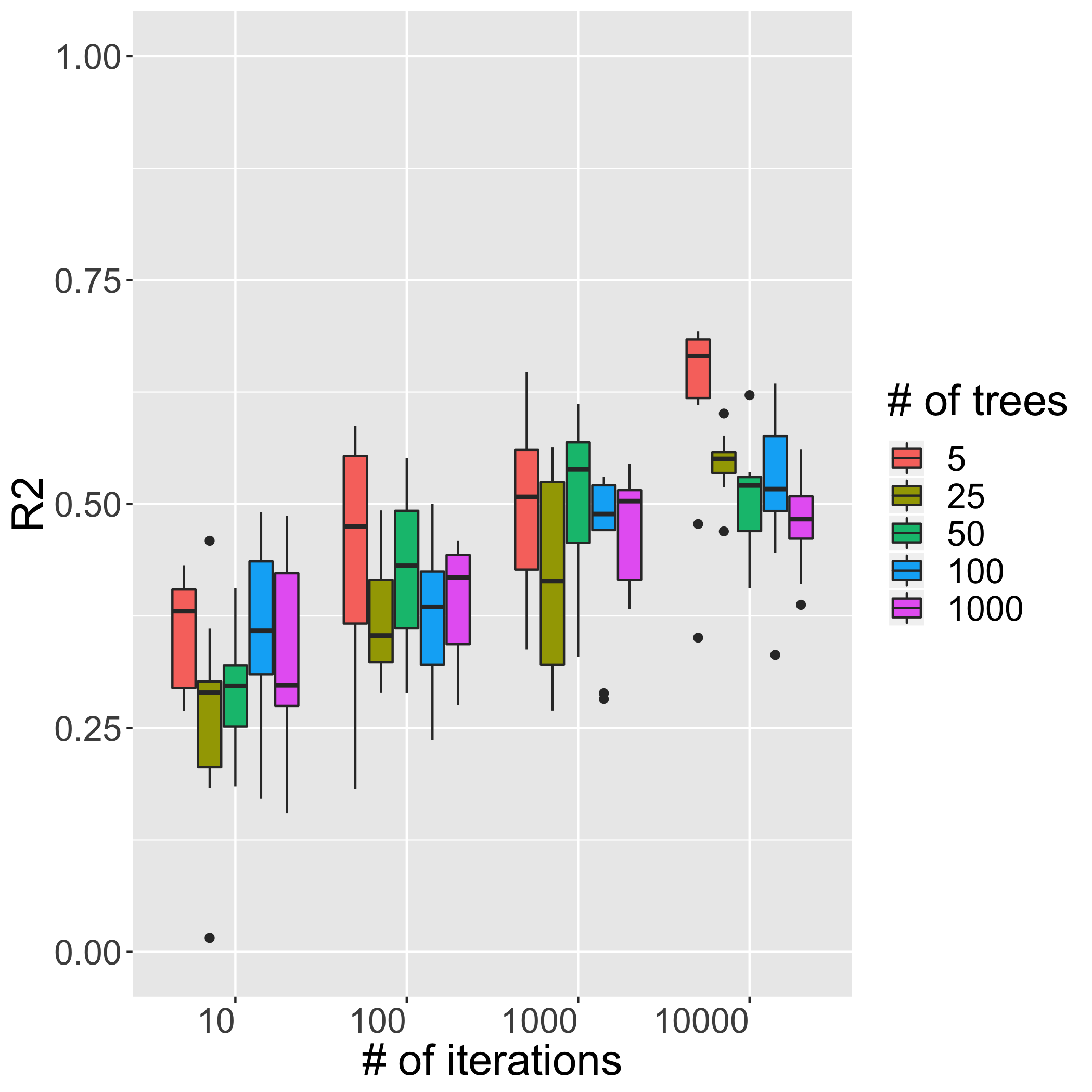}
        \caption{Leaves ($m$) = 8.}
        \label{fig:same8}
    \end{subfigure}
    ~ %add desired spacing between images, e. g. ~, \quad, \qquad, \hfill etc. 
    %(or a blank line to force the subfigure onto a new line)
    \begin{subfigure}[b]{0.3\textwidth}
        \includegraphics[width=0.9\textwidth]{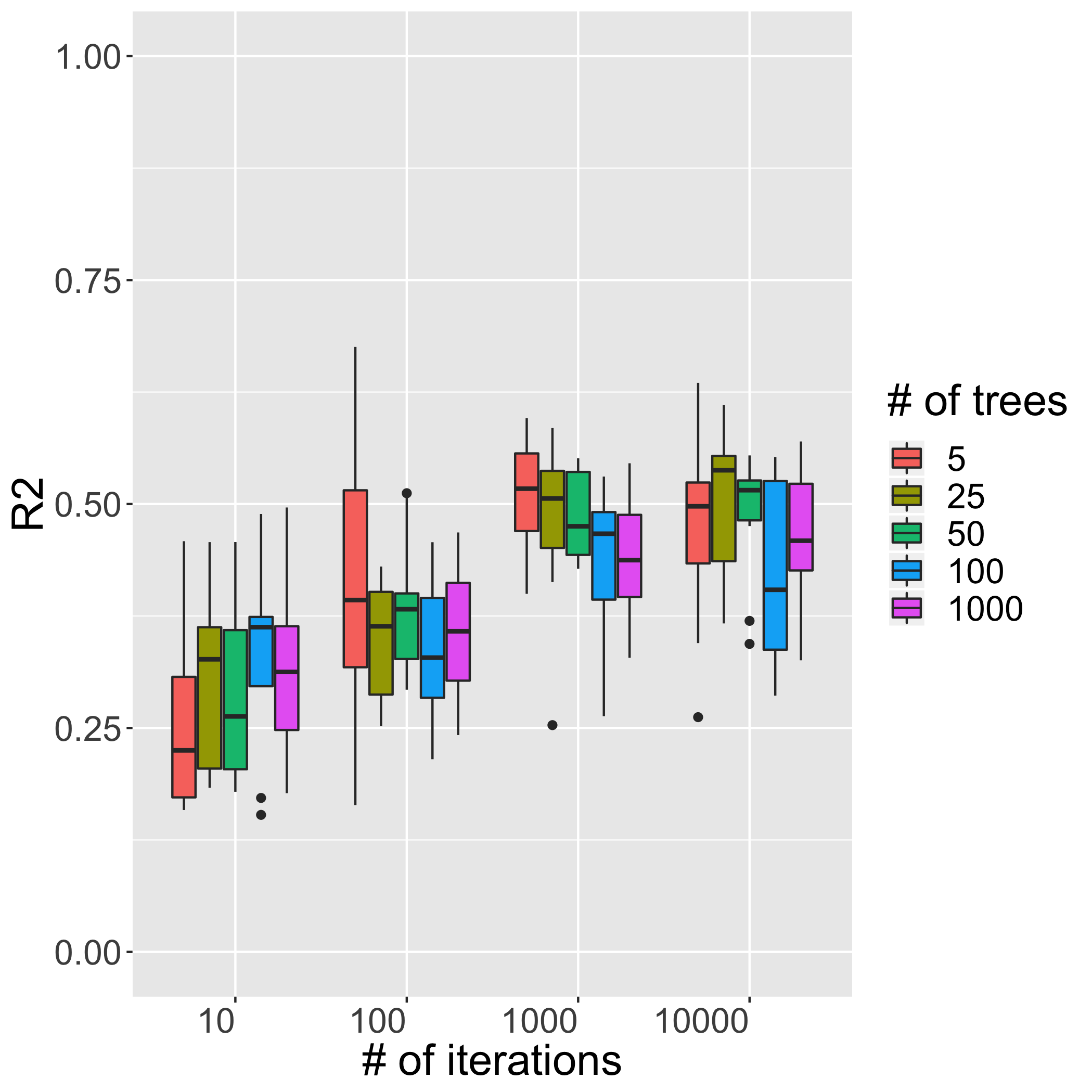}
        \caption{Leaves ($m$) = 9.}
        \label{fig:same9}
    \end{subfigure}
    \caption{We applied the MCMC approach to compute a tropical PCA on
      the datasets generated by Algorithm \ref{randtree} with the
      parameters listed in Table \ref{tab1}. We ran 10 Markov chains
      and we computed a box plot for each case. The y-axis represents
      $R^2$ and the x-axis represents the number of iterations for a MCMC.}\label{fig:sens1}
\end{figure}

\subsubsection{Random tree topology}

For this simulation we used {\tt rcoal()} function from {\tt ape}
package in R with fixed height equal to one.  
The simulation results can be found in Figure \ref{fig:sens2} below.  Note
that since trees are all random, in general $R^2$ should be much lower
compared to $R^2$ computed from the samples of ultrametrics with the
same tree topologies.
The overall trends of $R^2$ are similar to the ones with the fixed tree
topology.  
\begin{figure}[!h]
    \centering
    \begin{subfigure}[b]{0.3\textwidth}
        \includegraphics[width=0.9\textwidth]{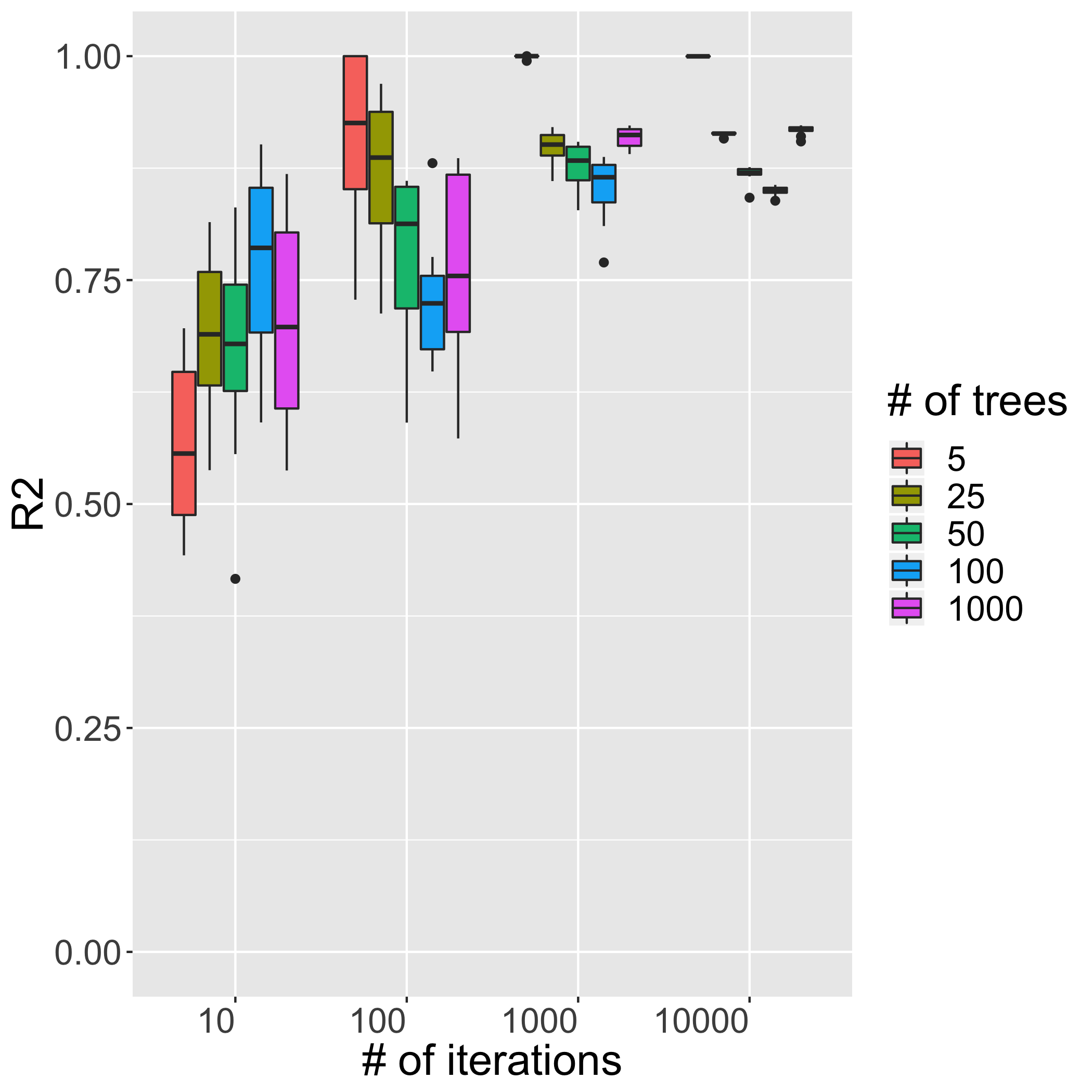}
        \caption{Leaves ($m$) = 4.}
        \label{fig:rand4}
    \end{subfigure}
    ~ %add desired spacing between images, e. g. ~, \quad, \qquad, \hfill etc. 
      %(or a blank line to force the subfigure onto a new line)
    \begin{subfigure}[b]{0.3\textwidth}
        \includegraphics[width=0.9\textwidth]{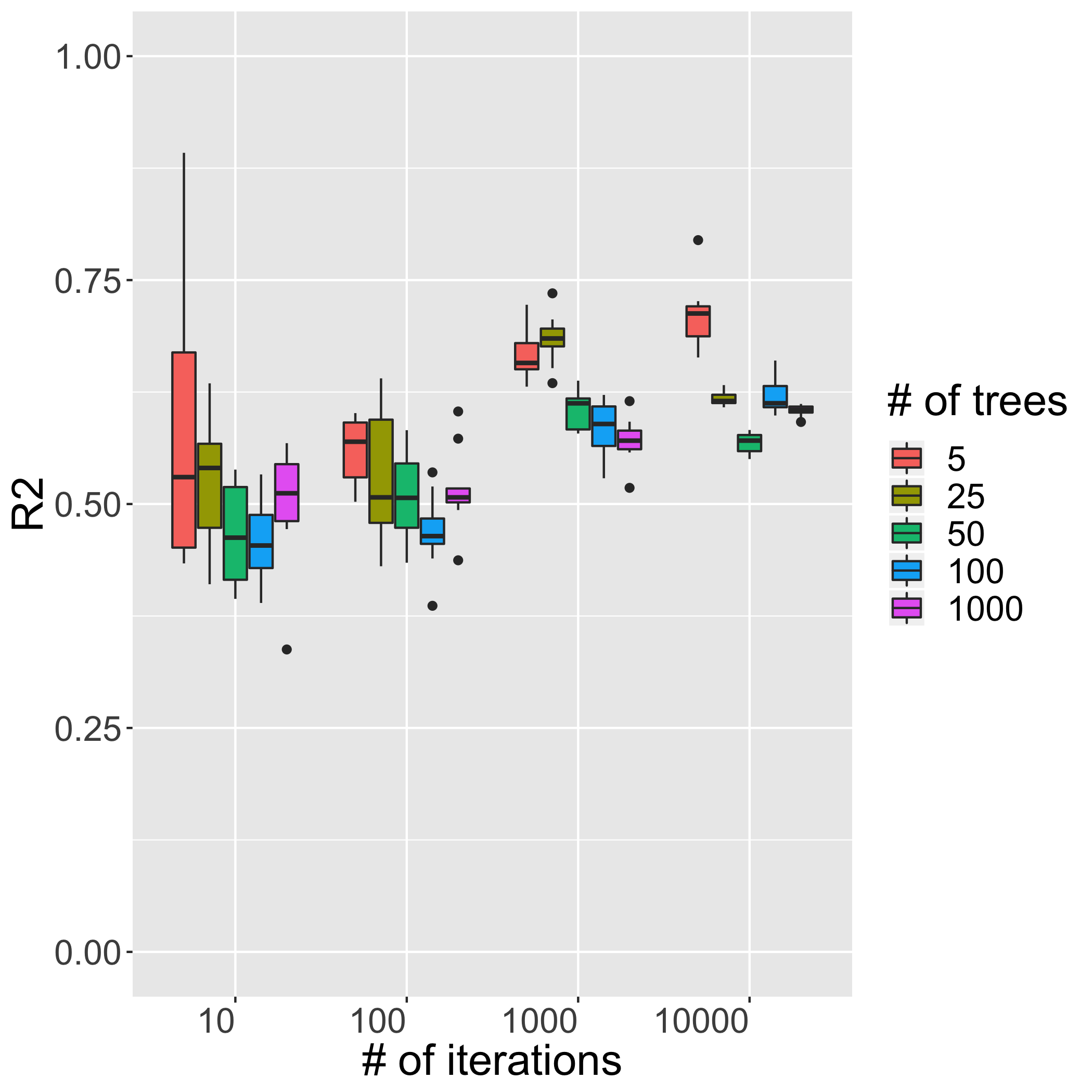}
        \caption{Leaves ($m$) = 5.}
        \label{fig:rand5}
    \end{subfigure}
    ~ %add desired spacing between images, e. g. ~, \quad, \qquad, \hfill etc. 
    %(or a blank line to force the subfigure onto a new line)
    \begin{subfigure}[b]{0.3\textwidth}
        \includegraphics[width=0.9\textwidth]{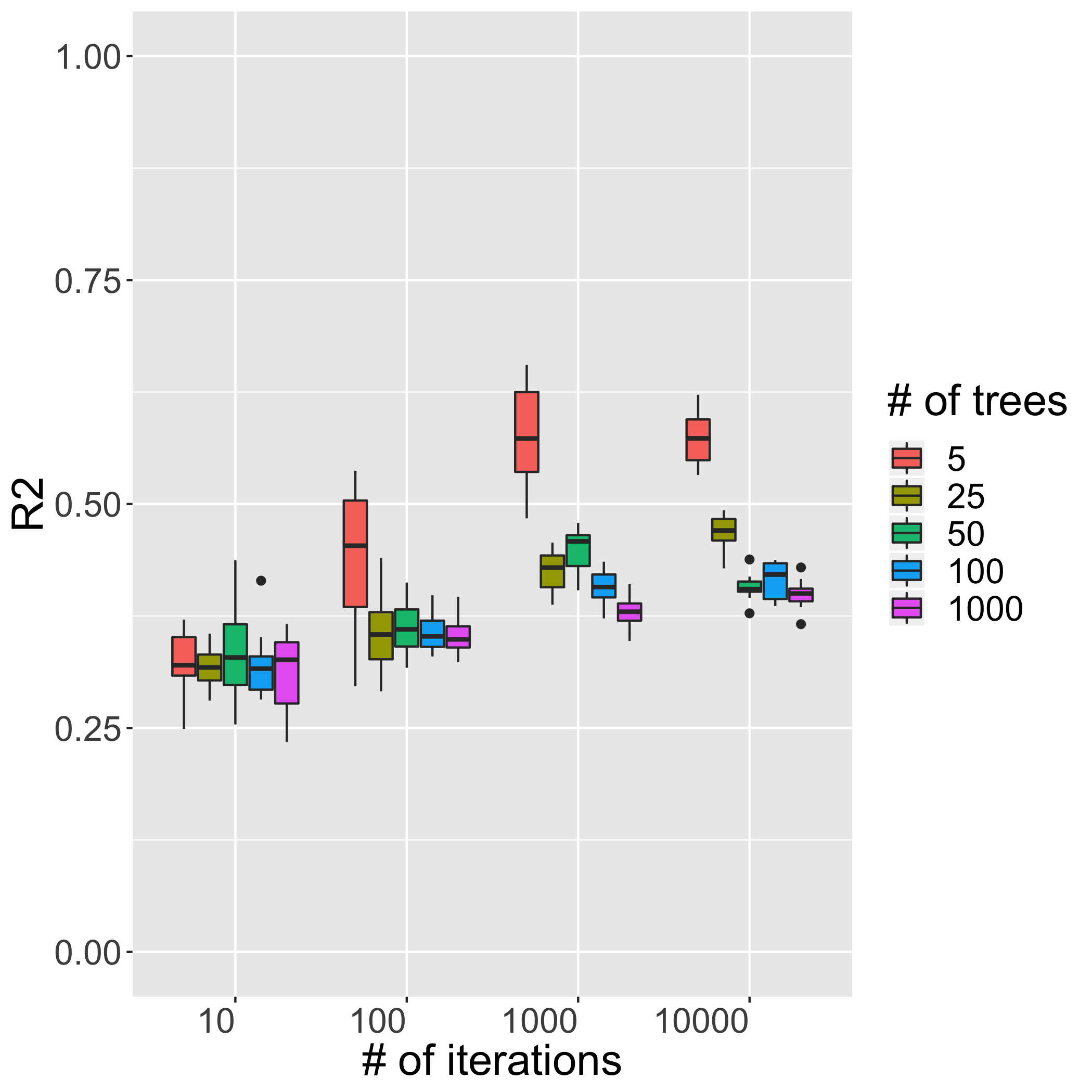}
        \caption{Leaves ($m$) = 6.}
        \label{fig:rand6}
    \end{subfigure}
 ~
    \begin{subfigure}[b]{0.3\textwidth}
        \includegraphics[width=0.9\textwidth]{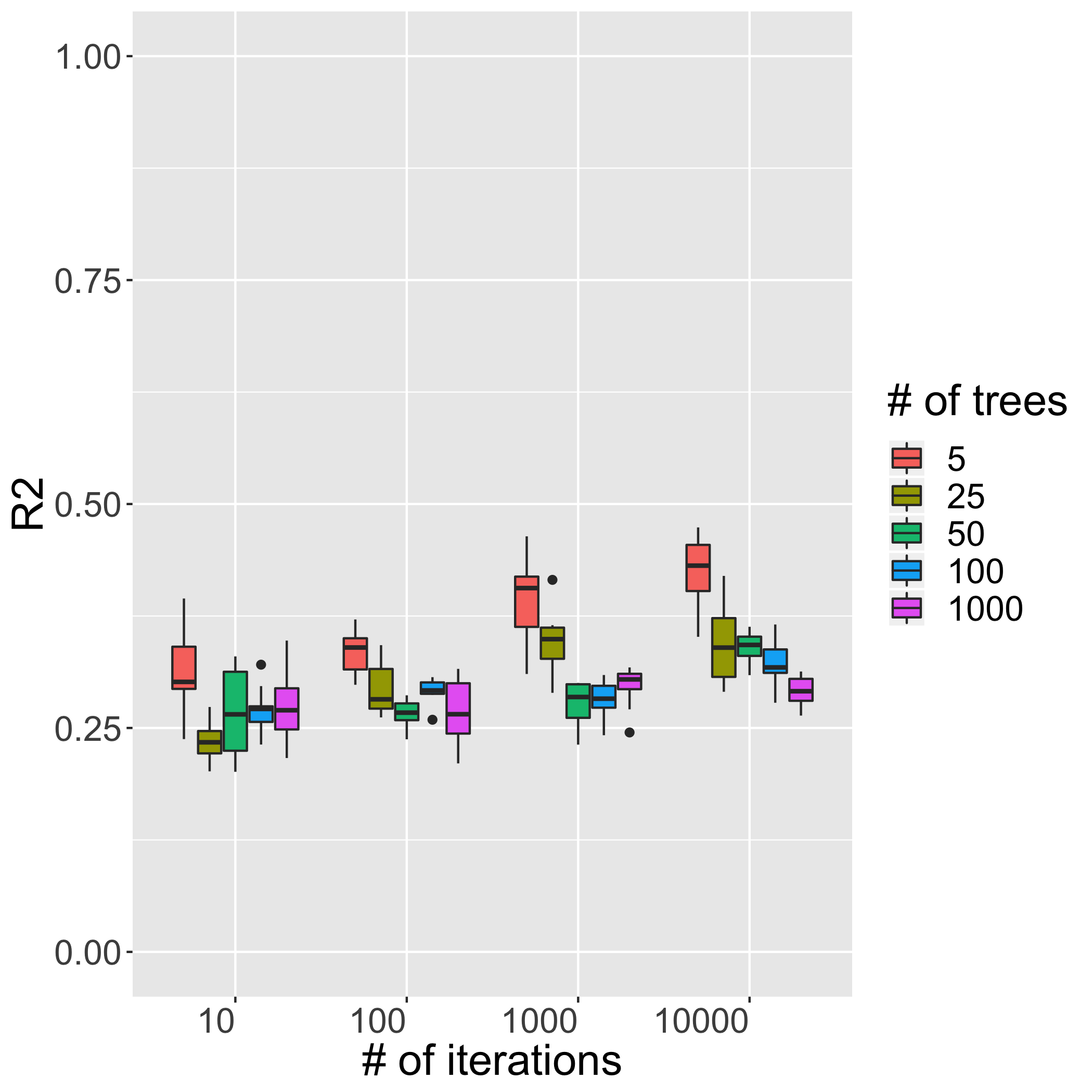}
        \caption{Leaves ($m$) = 7.}
        \label{fig:rand7}
    \end{subfigure}
    ~ %add desired spacing between images, e. g. ~, \quad, \qquad, \hfill etc. 
      %(or a blank line to force the subfigure onto a new line)
    \begin{subfigure}[b]{0.3\textwidth}
        \includegraphics[width=0.9\textwidth]{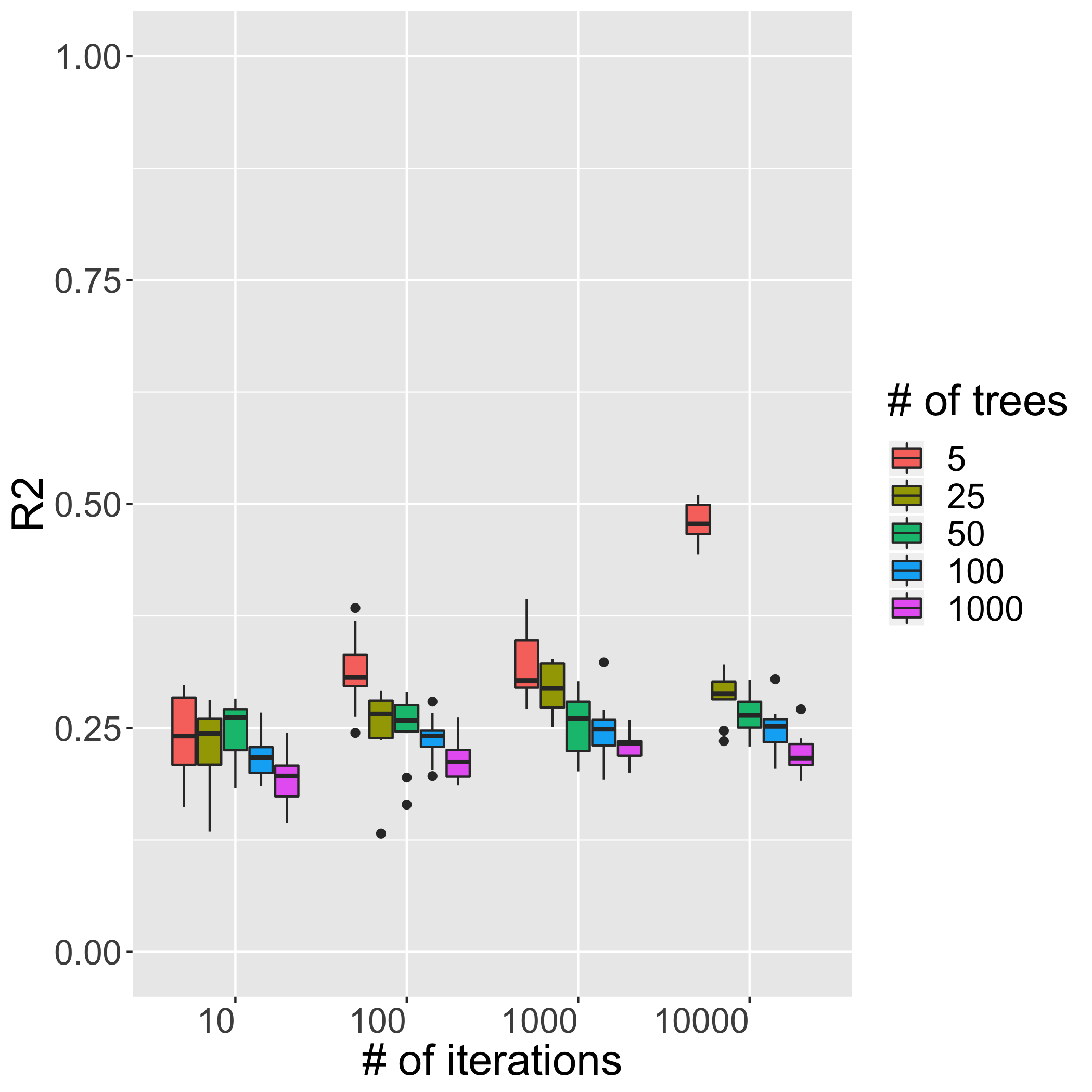}
        \caption{Leaves ($m$) = 8.}
        \label{fig:rand8}
    \end{subfigure}
    ~ %add desired spacing between images, e. g. ~, \quad, \qquad, \hfill etc. 
    %(or a blank line to force the subfigure onto a new line)
    \begin{subfigure}[b]{0.3\textwidth}
        \includegraphics[width=0.9\textwidth]{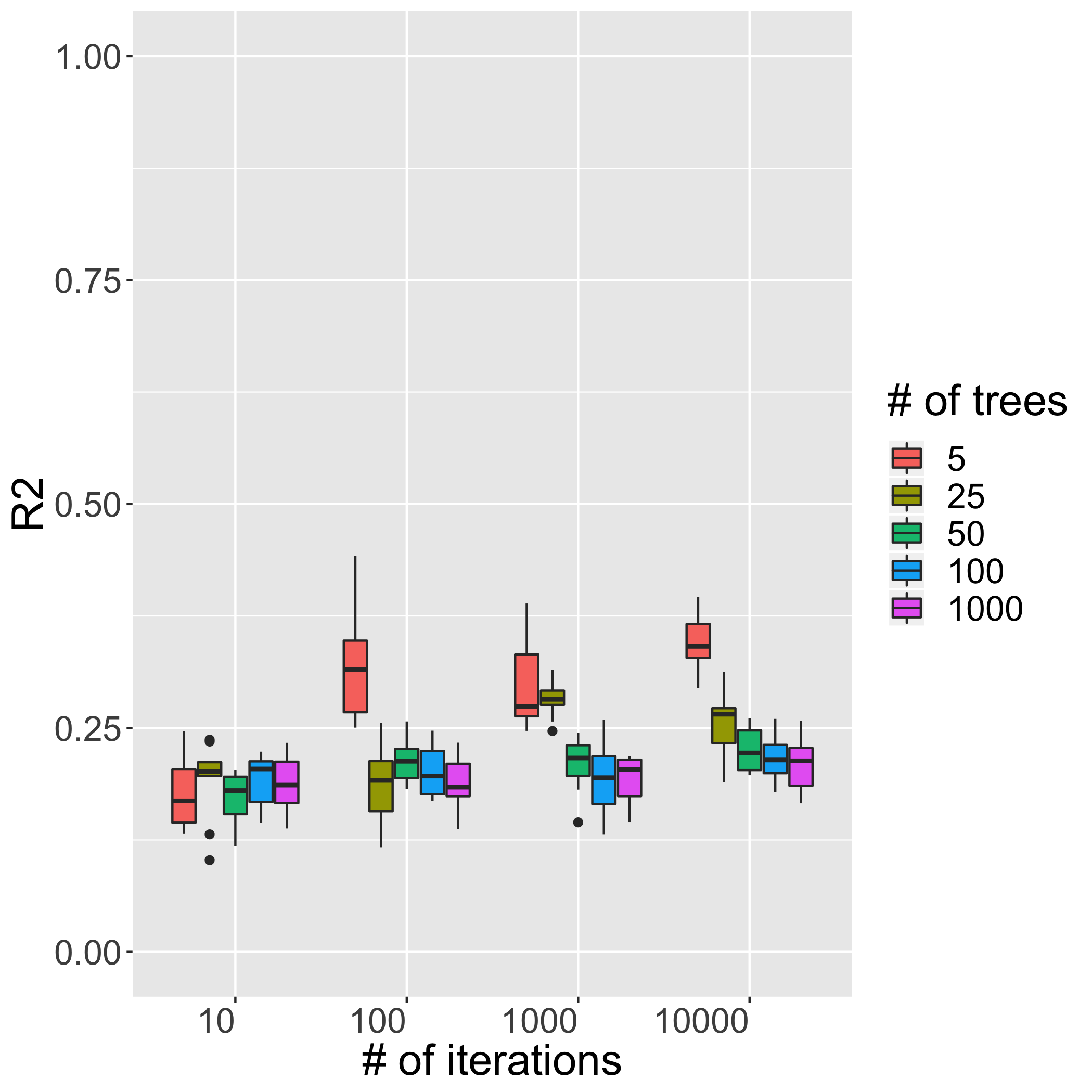}
        \caption{Leaves ($m$) = 9.}
        \label{fig:rand9}
    \end{subfigure}
    \caption{We applied the MCMC approach to compute a tropical PCA on
      the datasets of random trees with the
      parameters listed in Table \ref{tab1}. The y-axis represents
      $R^2$ and the x-axis represents the number of iterations for a MCMC.}\label{fig:sens2}
\end{figure}

\section{Empirical data}
\label{sec:experiments}
We applied our method to three empirical data sets: Apicomplexa gene
trees \cite{kuo}, the African coelacanth genome  \cite{Liang}, and flu virus data \cite{zairis2016genomic}.  

\subsection{African coelacanth genome data}

We applied our MCMC technique to estimate the tropical PCA  to  the
dataset  consisting of  1,290  genes  on  690,838  amino acid residues
obtained from genome and transcriptome data \cite{Liang}.  The result is
shown in Figure \ref{fig:lung}.  In Figure \ref{fig:lung} each tree
topology of a projection onto the second order tropical PCA has a
color and black color represents tree topologies in the lower five percentile.  

\begin{figure}
  \centering
    \begin{subfigure}[b]{0.6\textwidth}
        \includegraphics[width=1.0\textwidth]{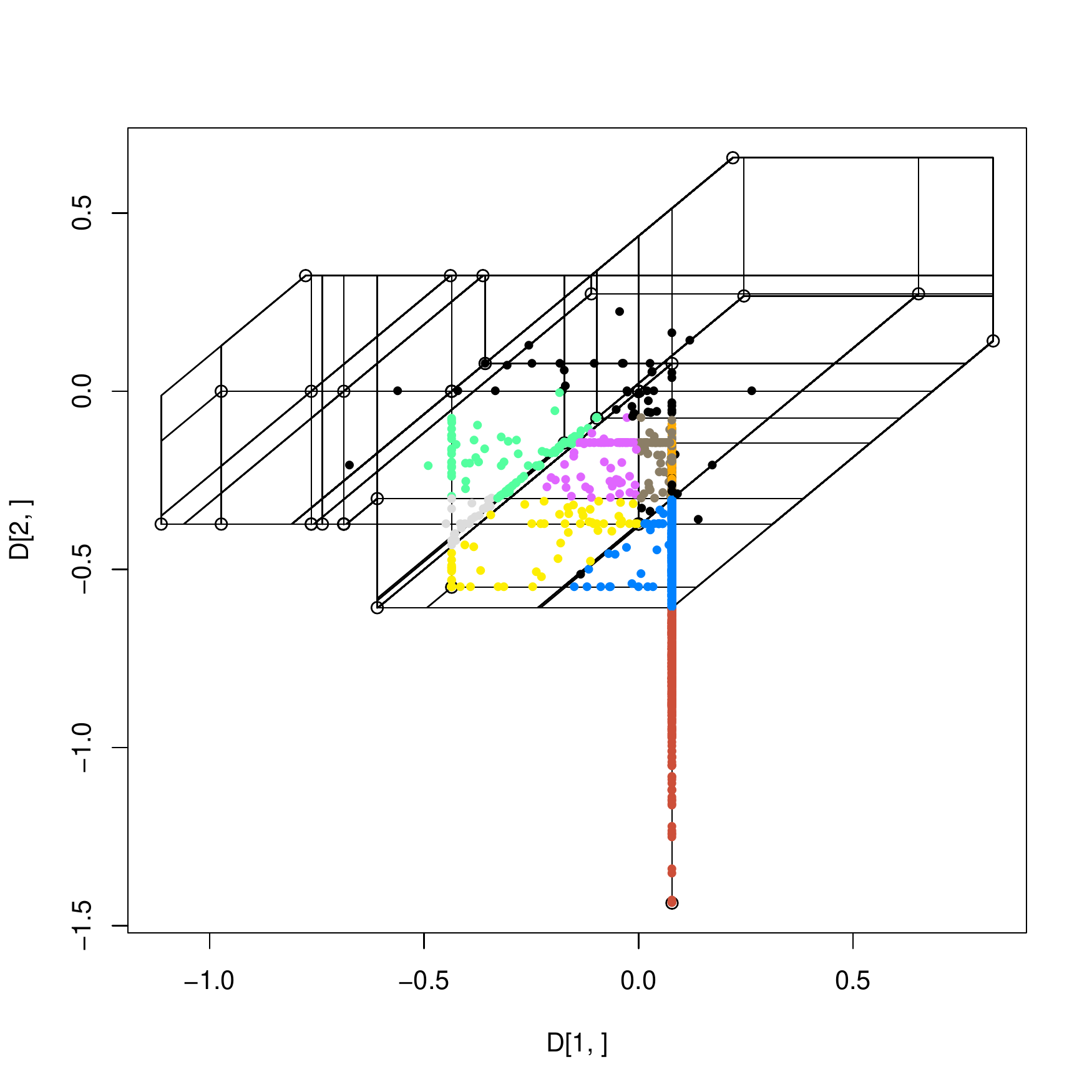}

\vskip 0.4in
        \caption{Second order tropical PCA for African coelacanth
          genome data.  Black
        colored dots are trees with the tree topologies with
        frequencies in the lower 5 percentile.}
    \end{subfigure}
    ~ %add desired spacing between images, e. g. ~, \quad, \qquad, \hfill etc. 
      %(or a blank line to force the subfigure onto a new line)
    \begin{subfigure}[b]{0.35\textwidth}
        \includegraphics[width=0.9\textwidth]{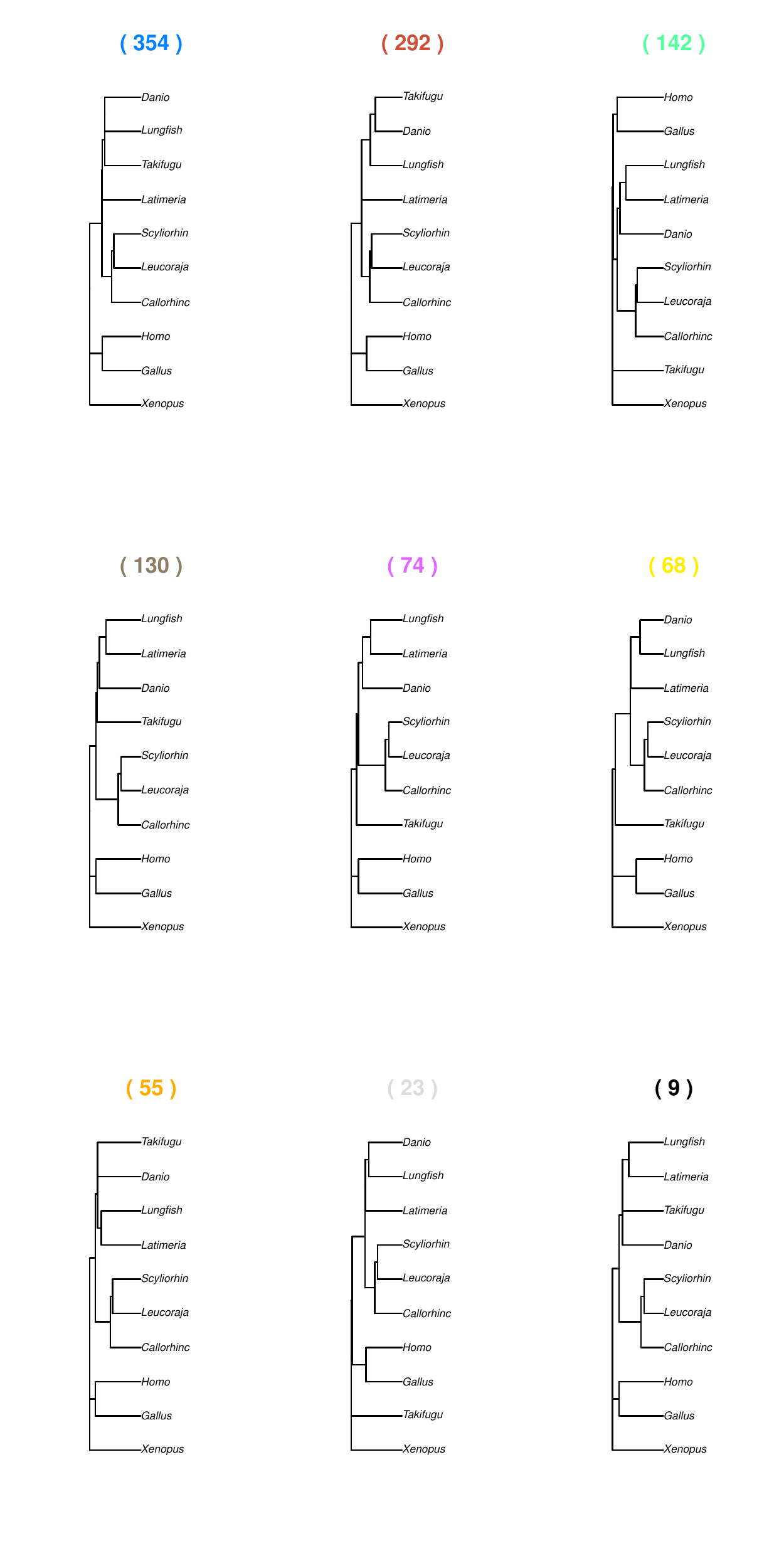}

        \caption{Tree topologies projected on the tropical PCA. }
    \end{subfigure}
  \caption{Estimated tropical PCA of African coelacanth genome data
    via our MCMC method.}\label{fig:lung} 
\end{figure}

\subsection{Apicomplexa}

The second empirical dataset we have applied is from  268  orthologous
sequences  with  eight  species  of  protozoa  presented  in  \cite{kuo}.
This data set has  gene  trees  reconstructed  from  the  following
sequences: {\it Babesia  bovis} (Bb), {\it Cryptosporidium
  parvum} (Cp), {\it Eimeria tenella} (Et) [15], {\it
  Plasmodium falciparum} (Pf) [11], {\it Plasmodium  vivax} (Pv),
{\it Theileria  annulata} (Ta),  and {\it Toxoplasma
  gondii} (Tg).  An outgroup is a free-living ciliate, {\it
  Tetrahymena  thermophila} (Tt).

The result is shown in Figure \ref{fig:apicomplexa}.  In Figure \ref{fig:apicomplexa} each tree
topology of a projection onto the second order tropical PCA has a
color and black color represents tree topologies with their
frequencies in the lower five percentile.  

\begin{figure}
  \centering
    \begin{subfigure}[b]{0.65\textwidth}
        \includegraphics[width=0.9\textwidth]{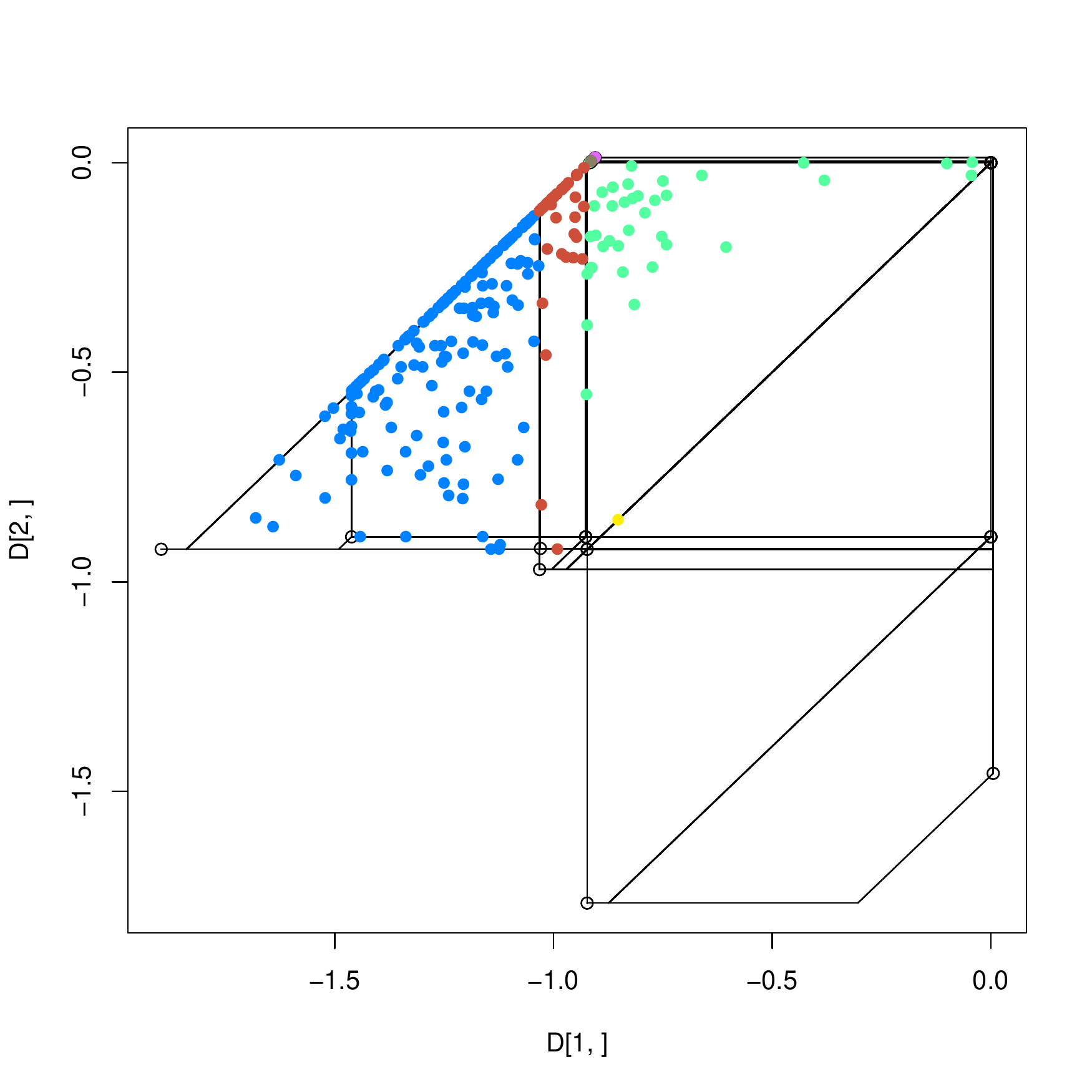}
        \caption{Second order tropical PCA for Apicomplexa gene data.
        Black
        colored dots are trees with the tree topologies with
        frequencies in the lower 5 percentile.}
    \end{subfigure}
    ~ %add desired spacing between images, e. g. ~, \quad, \qquad, \hfill etc. 
      %(or a blank line to force the subfigure onto a new line)
    \begin{subfigure}[b]{0.3\textwidth}
        \includegraphics[width=0.9\textwidth]{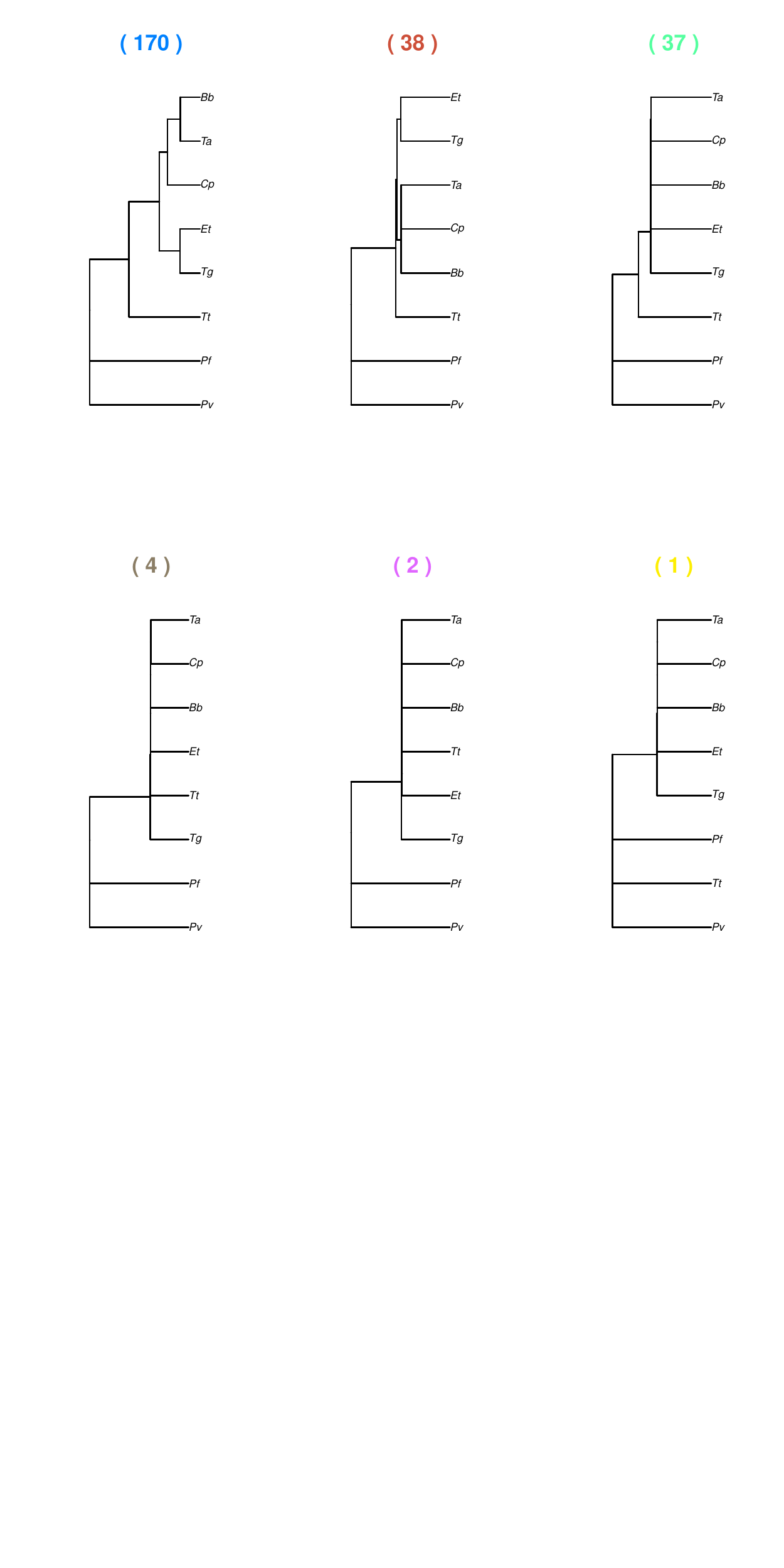}
        \caption{Tree topologies projected on the tropical PCA.}
    \end{subfigure}
  \caption{Estimated tropical PCA of gene trees on Apicomplexa data
    set via our MCMC method.}\label{fig:apicomplexa}
\end{figure}

\subsection{Flu virus data set}

We have applied our MCMC approach to estimate a tropical second order
PCA to a genomic data for 1089 full length sequences of hemagglutinin (HA) for
influenza A H3N2 from 1993 to 2017 in the state of New York were
obtained from the GI-SAID EpiFlu\textsuperscript{TM} database
(\url{www.gisaid.org}).  The data were aligned with {\sc muscle}
\cite{muscle} with the default settings. Then a {\em tree dimensionality
  reduction} \cite{zairis2016genomic} was applied via
windows of 5 consecutive seasons to create 21 datasets.
The year of each dataset corresponds to the first season.  We have
applied the the neighbor-joining method \cite{NJ} with p-distance to reconstruct
each tree in the datasets.  Outliers were then removed from each season using {\sc
  kdetrees} \cite{KDETree}.  Each sample size is about 20,000 (see Table
\ref{HA:size} for details). 

In this experiment we applied our MCMC approach to these datasets and
we compared our results with a PCA via BHV metric developed by Nye et
al \cite{NTWY}.  The results of our computational experiments can be
found in Table \ref{HA:R2}.

{\small
\begin{table}[h!]
\begin{center}
\begin{tabular}{ | l | l | l | l | l | }
\hline
	Year & Tree with 4 leaves &  & Tree with 5 leaves &  \\ \hline
	 & Before & After & Before & After \\ \hline
	1993 & 20000 & 20000 & 20890 & 20890 \\ \hline
	1994 & 20000 & 20000 & 20000 & 20000 \\ \hline
	1995 & 20000 & 20000 & 20000 & 20000 \\ \hline
	1996 & 20000 & 20000 & 20000 & 20000 \\ \hline
	1997 & 20000 & 20000 & 20000 & 20000 \\ \hline
	1998 & 20000 & 20000 & 20000 & 20000 \\ \hline
	1999 & 20000 & 20000 & 20000 & 20000 \\ \hline
	2000 & 20000 & 20000 & 20000 & 20000 \\ \hline
	2001 & 20000 & 20000 & 20000 & 19997 \\ \hline
	2002 & 20000 & 19995 & 20000 & 19910 \\ \hline
	2003 & 20000 & 20000 & 20000 & 20000 \\ \hline
	2004 & 20000 & 20000 & 20000 & 20000 \\ \hline
	2005 & 20000 & 20000 & 20000 & 20000 \\ \hline
	2006 & 20000 & 20000 & 20000 & 20000 \\ \hline
	2007 & 20000 & 20000 & 20000 & 19118 \\ \hline
	2008 & 6030 & 5887 & 20000 & 20000 \\ \hline
	2009 & 4590 & 4590 & 20000 & 20000 \\ \hline
	2010 & 20000 & 20000 & 20000 & 20000 \\ \hline
	2011 & 20000 & 20000 & 20000 & 19366 \\ \hline
	2012 & 21411 & 21411 & 20000 & 19876 \\ \hline
	2013 & 20000 & 19957 & 20000 & 19996 \\ \hline
	2014 & 20000 & 20000 & N/A & N/A \\ \hline
\end{tabular}
\caption{The sample sizes for flu virus dataset}\label{HA:size}
\end{center}
\end{table}
}

{\small
\begin{table}[h!]
\begin{center}
\begin{tabular}{ | l | l | l | l | l | }
\hline
	Year & Tree with 4 leaves &  & Tree with 5 leaves &  \\ \hline
	 & Tropical PCA & BHV & Tropical PCA & BHV \\ \hline
	1993 & 0.9981 (0.9559) & 0.7099 & 0.8962 (0.7269) & 0.3019 \\ \hline
	1994 & 0.9997 (0.9426) & 0.4611 & 0.9559 (0.8505) & 0.4347 \\ \hline
	1995 & 0.9999 (0.8665) & 0.19 & 0.9787 (0.9577) & 0.3151 \\ \hline
	1996 & 0.9997 (0.9821) & 0.215 & 0.9851 (0.7482) & 0.5025 \\ \hline
	1997 & 0.9930 (0.9532) & 0.0069 & 0.9430 (0.8437) & 0.0505 \\ \hline
	1998 & $>$0.9999 (0.9395) & 0.0452 &  0.9264 (0.879) &0.6408 \\ \hline
	1999 & $>$0.9999 (0.9069) & 0.0038 & 0.9798 (0.8564) & 0.9524 \\ \hline
	2000 & 0.9892 (0.9132) & 0.9555 & 0.9302 (0.794) & 0.0014 \\ \hline
	2001 & $>$0.9999 (0.9088) & 0.9402 & 0.9526 (0.8302) & 0.9488 \\ \hline
	2002 & 0.9995 (0.9863) & 0.0107 &  0.9956 (0.9525)&0.8962 \\ \hline
	2003 & 0.9995 (0.9848) & 0.0972 & 0.9685 (0.8622) & 0.4927 \\ \hline
	2004 & 0.9982 (0.9505) & 0.4272 & 0.9502 (0.7931) & 0.3651 \\ \hline
	2005 & 0.9998 (0.9949) & 0.4628 & 0.9770 (0.8304) & 0.3634 \\ \hline
	2006 & 0.9972 (0.9643) & 0.0951 & 0.8350 (0.73) & 0.2383 \\ \hline
	2007 & 0.9926 (0.9381) & 0.5562 & 0.8912 (0.6995) & 0.2727 \\ \hline
	2008 & 0.9920 (0.8813)& 0.4887 & 0.7753 (0.4637) & 0.0460\\ \hline
	2009 & 0.9860 (0.8926) & 0.0763 & 0.9034 (0.6289) & 0.1563 \\ \hline
	2010 & 0.9995 (0.8886) & 0.0329 & 0.8603 (0.6665) & 0.1935 \\ \hline
	2011 & 0.9999 (0.9016) & 0.3592 & 0.6888 (0.5920) & 0.2771\\ \hline
	2012 & 0.9930 & 0.2756& 0.7177(0.5568) & 0.1998 \\ \hline
	2013 & 0.9499 (0.7935) & 0.3612 & 0.7433 (0.5624) & 0.1279 \\ \hline
	2014 & 0.9727 & 0.1383 & N/A & N/A \\ \hline
\end{tabular}
\caption{$R^2$ for flu virus dataset.  $R^2$ for the tropical metric
  is measured via the formula in the equation \eqref{r2def} and for
  BHV is measured by the formula defined in \cite{NTWY}. $R^2$ inside
  of parentheses are computed via a heuristic method in \cite{MLKY}.}\label{HA:R2}
\end{center}
\end{table}
}

\section{Conclusion}
\label{sec:conc}

This paper provides theoretical background for the interpretation of tropical PCAs on the space of ultrametrics and introduces a  our novel stochastic
method using a Metropolis-Hastings algorithm to search
phylogenetic tree space. 

We successfully implement our innovative MCMC tropical PCA approach on
three empirical datasets, Apicomplexa, African coelacanth genomes, and
HA sequences of influenza virus.
Our results for all of them are notable.
Each plot shows a tight cluster, implying that the use of gene trees to infer species topology is valid. 
The strong $R^2$ values suggest our second order tropical PCA approach is effectively reducing gene tree dimensions.
The inferential topologies for the species tree are nearly identical,
strengthening the proof of our hypothesis.
For African coelacanth genomes data, from our tropical PCA Lung
fish is closest to the 
tetrapods in the tree topology of the
projected trees onto the tropical PCA with the highest frequency.
This is consistent with the result found in \cite{Liang}.  In addition
for Apicomplexa data, out tropical PCA shows that the tree topology of
the projected trees onto the tropical PCA is the same as the species
tree reconstructed by \cite{kuo}.  These results show that our method
to estimate the tropical PCA works well with these empirical data.

Future research could consider model and visualization improvements.
Streamlining the tropical MCMC implementation in R, specifically the coding, will improve efficiency and usability. 
There is room for improvements in the heating and cooling of the MCMC function, which will increase effectiveness of the tropical PCA algorithm.
Further exploration is necessary to understand the best starting set of trees for the tropical triangle.

Clearer and more meaningful visualization improves interpretability of tropical PCA results.
For example, the tropical line segments in our plots correspond to tree topologies; however, identifying those tree topologies takes significant effort.
Additionally, producing three-dimensional plots would greatly improve quality.
A better understanding of tropical tree space and the mapping of the space to tree topologies may enable further application of tropical PCA techniques to phylogenetics.

% \bigskip
% \begin{center}
% {\large\bf SUPPLEMENTAL MATERIALS}
% \end{center}

% \begin{description}

% \item[Title:] Brief description. (file type)

% \item[R-package for  MYNEW routine:] R-package ÒMYNEWÓ containing code to perform the diagnostic methods described in the article. The package also contains all datasets used as examples in the article. (GNU zipped tar file)

% \item[HIV data set:] Data set used in the illustration of MYNEW method in Section~ 3.2. (.txt file)

% \end{description}

{}

\end{document}